\numberwithin{figure}{section}
\newtheorem{thm}{Theorem}[section]
\newtheorem{lem}[thm]{Lemma}
\theoremstyle{definition}
\newtheorem{defn}[thm]{Definition}
\title{Distinguishing mutant pretzel knots in concordance}
\author{Allison N. Miller}
\address{Department of Mathematics, University of Texas, Austin, TX 78712, USA}
\begin{document}

\begin{abstract}
We prove that many pretzel knots of the form $K=P(2n,m,-2n \pm1,-m)$ are not topologically slice, even though their positive mutants $P(2n, -2n \pm1, m, -m)$ are ribbon. 
We use the sliceness obstruction of Kirk and Livingston \cite{KL99a} related to the twisted Alexander polynomials associated to prime power cyclic covers of knots. 

\end{abstract}
\bibliographystyle{alpha}

\maketitle

\section{Introduction}

A knot $K$ in $S^3$ is said to be {smoothly slice} if it bounds a smoothly embedded disc in $B^4$, and {topologically slice} if it bounds a locally flat embedded disc in $B^4$. A long-standing conjecture states that $K$ is smoothly slice if and only if it is ribbon \cite{Kirby78}-- i.e., if and only if $K$ bounds an immersed disc in $S^3$ with only ribbon self-intersections. 

Recently Lisca \cite{Lisca07} and Greene-Jabuka \cite{GJ13} proved this conjecture for the classes of 2-bridge knots and 3-strand pretzel knots with all parameters odd, respectively. Their arguments rely on sliceness obstructions associated to the double branched cover of the knot, and which come from Donaldson's intersection theorem and Heegaard Floer homology. 

\begin{thm}[\cite{Lisca07}]\label{rational}
The slice-ribbon conjecture holds for 2-bridge knots. 
\end{thm}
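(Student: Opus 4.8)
The plan is to pass from sliceness to a statement about the double branched cover, invoke Donaldson's diagonalization theorem, and reduce the conjecture to a combinatorial classification of negative definite lattices.

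Recall that a $2$-bridge knot is $K = K(p,q)$ for coprime integers with $0<q<p$, that $K(p,q)$ is a mirror image issue away from $K(p,p-q)$, and that the double branched cover $\Sigma_2(K(p,q))$ is the lens space $L(p,q)$. One direction of the conjecture is immediate, since ribbon knots are slice; so the content is the converse. Suppose $K(p,q)$ is smoothly slice, bounding a smooth disc $D\subset B^4$, and let $W := \Sigma_2(B^4,D)$ be the double cover of $B^4$ branched over $D$. Then $W$ is a smooth rational homology $4$-ball with $\partial W = L(p,q)$. Thus it suffices to prove: if $L(p,q)$ bounds a smooth rational homology $4$-ball, then $p/q$ lies in the explicit family of continued fractions corresponding to $2$-bridge knots that are classically known to be ribbon (those arising from an evident band move), which I would list at the outset.

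Next, expand $p/q = [a_1,\dots,a_k]^-$ as a Hirzebruch--Jung continued fraction with all $a_i \ge 2$, and let $X = X(p,q)$ be the associated smooth linear plumbing $4$-manifold: it is negative definite, has $\partial X = L(p,q)$, and its intersection lattice is the tridiagonal form with $-a_i$ on the diagonal and $1$'s on the off-diagonals. Gluing a copy of $W$ to $X$ along their common boundary (with the appropriate orientation reversal) yields a closed, smooth, negative definite $4$-manifold $Z$ with $b_2(Z) = k$, since the rational homology ball $W$ contributes nothing to $b_2$. Donaldson's theorem forces the intersection form of $Z$ to be the standard diagonal form $\langle -1\rangle^{\oplus k}$, so the tridiagonal lattice of $X$ embeds as a finite-index sublattice of $\mathbb{Z}^k$ with its standard negative definite form. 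Applying the same construction to $L(p,p-q) = -L(p,q)$, which bounds the rational homology ball $\overline{W}$, gives a second such embedding and sharpens the constraints.

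The combinatorial heart of the argument is then to classify all strings $(a_1,\dots,a_k)$ with $a_i \ge 2$ whose tridiagonal negative definite lattice admits a full-rank embedding into $\mathbb{Z}^k$. One studies how the chain of vertex classes $v_1,\dots,v_k$ with $v_i^2 = -a_i$ and $v_i \cdot v_{i+1} = 1$ can be expressed in an orthonormal basis $e_1,\dots,e_k$ of $\mathbb{Z}^k$, exploiting the linear structure to run an induction that repeatedly normalizes the embedding and shortens the string by standard reduction (``blow-down''-type) moves. The output is that $[a_1,\dots,a_k]^-$ must lie in a recursively described family $\mathcal{R}$, and a separate check shows that $\mathcal{R}$ is precisely the list of continued fractions of known ribbon $2$-bridge knots; combined with the implication $p/q \in \mathcal{R} \Rightarrow K(p,q)$ ribbon, this closes the loop.

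The main obstacle is exactly this lattice-embedding classification: proving that the \emph{only} strings whose tridiagonal form embeds in the diagonal lattice of equal rank are those in $\mathcal{R}$ requires a delicate case analysis tracking the sign and overlap patterns of the $v_i$ along the orthonormal basis, and showing every admissible embedding can be reduced step by step. By comparison, the reduction to Donaldson's theorem and the verification that members of $\mathcal{R}$ bound ribbon discs are routine; the difficulty is entirely in establishing that $\mathcal{R}$ is exhaustive.
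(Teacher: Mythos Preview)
The paper does not actually prove this theorem: it is stated as background and attributed to Lisca \cite{Lisca07}, with no argument given. Your sketch is a faithful outline of Lisca's own approach---pass to the double branched cover $L(p,q)$, use a slice disc to produce a rational homology ball bounded by $L(p,q)$, glue to the linear plumbing, apply Donaldson's diagonalization theorem to force an embedding of the tridiagonal lattice into the standard diagonal lattice of the same rank, and then carry out the combinatorial classification of such embeddable strings. You are also right that the hard step is the exhaustive lattice-embedding analysis; the rest is standard. So there is nothing to compare against in this paper, and your proposal correctly identifies the strategy of the cited source.
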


\begin{thm}[\cite{GJ13}]\label{3strand}
Let $P(p,q,r)$ be a 3-strand pretzel knot with $p, q, r$ odd and $|p|, |q|, |r| \geq 3$.\footnote{That is, such that $P(p,q,r)$ is not a 2-bridge knot.}
Then $P(p,q,r)$ is smoothly slice iff $p+q=0$, $p+r=0$, or $q+r=0$, and in each of these cases $P(p,q,r)$ is ribbon.  
\end{thm}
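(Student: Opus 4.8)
The plan is to follow the by-now standard template for slice--ribbon theorems: translate sliceness of $K$ into the statement that its double branched cover $\Sigma_2(K)$ bounds a rational homology $4$-ball, obstruct such a bounding using Donaldson's diagonalization theorem together with Ozsv\'ath--Szab\'o correction terms, push the resulting lattice-embedding classification through, and finally exhibit ribbon discs for the families that survive. The first move is a normalization. A pretzel knot depends only on the unordered triple of its parameters, and mirroring sends $P(p,q,r)$ to $P(-p,-q,-r)$; hence, up to mirror image, one may assume either $p,q,r>0$ or exactly one of them is negative. When $p,q,r>0$ the determinant $\det P(p,q,r)=pq+qr+rp$ is positive and $\equiv 3\pmod 4$, hence is not a perfect square, so $K$ is not slice (and, trivially, none of $p+q,\,q+r,\,r+p$ vanishes). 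So it suffices to treat $K=P(p,q,-s)$ with $p,q,s\ge 3$ odd and to prove that $K$ is slice precisely when $s\in\{p,q\}$, each such case being ribbon.

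For the obstruction side I would realize $\Sigma_2(K)$ as the boundary of a plumbing $X=X_{p,q,-s}$ of disc bundles over $S^2$ on a star-shaped graph with three legs: the legs are linear chains read off from continued-fraction expansions of the parameters, and the central vertex carries the remaining framing. After blowing up or down and reorienting as necessary, one arranges that $X$ is negative definite and, crucially, \emph{sharp} for the Ozsv\'ath--Szab\'o plumbing calculus --- which requires bounding the number of ``bad'' vertices of the graph, the potential culprit being the central one. On the other hand, if $K$ is slice then the double branched cover of a slicing disc is a rational homology ball $W$ with $\partial W=\Sigma_2(K)$; ``half lives, half dies'' then forces $|H_1(\Sigma_2(K))|=|pq-s(p+q)|$ to be a perfect square --- already eliminating a great many triples --- and produces a metabolizer $G\le H_1(\Sigma_2(K))$, with $|G|^2=|H_1|$, consisting of the $\mathrm{Spin}^c$ structures on $\Sigma_2(K)$ that extend over $W$.

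Capping $X$ with $-W$ along $\Sigma_2(K)$ now yields a closed negative-definite $4$-manifold $Z$ with $b_2(Z)=b_2(X)$, so by Donaldson's theorem the intersection lattice of $X$ embeds isometrically, as a finite-index sublattice, into the diagonal lattice $\mathbb{Z}^{b_2(X)}\langle-1\rangle$. Simultaneously, the correction terms $d(\Sigma_2(K),\mathfrak s)$, computable from the sharp plumbing $X$ via the Ozsv\'ath--Szab\'o algorithm, must vanish for every $\mathfrak s\in G$. The heart of the proof is then a purely combinatorial classification: decide exactly for which $(p,q,s)$ the three-legged plumbing lattice admits such a diagonal embedding \emph{and} passes the $d$-invariant test, and check that the answer is precisely $s\in\{p,q\}$. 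I expect this step to be by far the most laborious and the genuine obstacle: enumerating the embeddings of these star-shaped plumbing lattices into diagonal lattices is a long and delicate case analysis in the spirit of Lisca's treatment of linear lattices underlying Theorem~\ref{rational}, and the diagonalizability obstruction alone is not sharp, so the Heegaard Floer computation --- itself subtle because of the bad-vertex issue --- must be interwoven to dispose of the residual cases.

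Finally, for the surviving families one has $K=P(p,q,-p)$ when $s=p$ (and symmetrically when $s=q$): I would exhibit an explicit band move on the standard pretzel diagram that cancels the $p$- and $(-p)$-tangles, converting $K$ into a two-component unlink and hence presenting an embedded ribbon disc. Together with the classification above this proves the theorem; consistency with Theorem~\ref{rational} on the (small) overlap where $P(p,q,-s)$ is already $2$-bridge provides a reassuring check.
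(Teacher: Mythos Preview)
The paper does not contain a proof of this statement: Theorem~\ref{3strand} is quoted from \cite{GJ13} as background and no argument is given in the text. Your outline is essentially a faithful sketch of the actual Greene--Jabuka proof (Donaldson diagonalization plus Heegaard Floer $d$-invariants applied to the double branched cover, followed by a lattice-embedding classification and the explicit ribbon move), so there is nothing in the paper to compare it against beyond the attribution.
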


A natural extension of the case of 3-strand pretzel knots with all parameters odd is that of pretzel knots in general. Lecuona has results concerning the smooth slice status of pretzel knots with arbitrarily many strands and one even parameter \cite{Lec13}, while  Long has results on the sliceness of arbitrary 4- and 5-strand pretzels  \cite{Long14}. 
In particular, note that pretzel knots of the form $P(2n, -2n \pm1, m, -m)$ can easily be seen  to be ribbon. The following theorem, due independently to Lecuona and Long, establishes that up to reordering of the parameters these are in fact all of the smoothly slice 4-strand pretzel knots. 

\begin{thm}[\cite{Lec13}, \cite{Long14}] \label{4strand}
Suppose the pretzel knot $P(a,b,c,d)$ is smoothly slice. 
Then $\{a,b,c,d\}=\{2n, -2n \pm1, m, -m\}$ for some $m, n \in \mathbb{Z}$. 
\end{thm}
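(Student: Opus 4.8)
The plan is to exploit the restrictions that smooth sliceness places on the double branched cover. If $K=P(a,b,c,d)$ is smoothly slice then $\Sigma:=\Sigma_2(K)$, the double cover of $S^3$ branched along $K$, bounds a smooth rational homology $4$-ball $Q$; in particular $\det K=|H_1(\Sigma;\mathbb{Z})|$ is a perfect square, where for a $4$-strand pretzel knot one has $\det K=|abc+abd+acd+bcd|$. The manifold $\Sigma$ is a Seifert fibered space with an explicit plumbing description: pushing the $4$-band checkerboard spanning surface of the standard diagram of $K$ into $B^4$ and passing to the double branched cover produces a $4$-manifold $W$ with $\partial W=\Sigma$ and intersection form the Goeritz matrix
\[
G=\begin{pmatrix} a+b & -b & 0 \\ -b & b+c & -c \\ 0 & -c & c+d \end{pmatrix},\qquad \det G=abc+abd+acd+bcd .
\]
Because $\det K$ is a nonzero odd integer, $G$ is nondegenerate, so in the closed $4$-manifold $X:=W\cup_\Sigma(-Q)$ the lattice $(H_2(W),G)$ sits as a full-rank sublattice of the unimodular lattice $(H_2(X),Q_X)$, of index $\sqrt{|\det G|}$; this already recovers the perfect-square condition on $\det K$ and is the hypothesis that drives everything.

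First I would dispose of degenerate configurations: a zero parameter reduces $K$ to a connected sum of pretzel knots with fewer strands, for which the conclusion is already known (compare Theorems~\ref{rational} and~\ref{3strand}), and a parameter equal to $\pm1$ can be removed by a Kirby move on the plumbing, so it suffices to treat $|a|,|b|,|c|,|d|\ge 2$. After possibly replacing $K$ by its mirror $P(-a,-b,-c,-d)$ (again a slice $4$-pretzel, and the target family $\{2n,-2n\pm1,m,-m\}$ is stable under negating all parameters) and reordering, we organize the argument by sign pattern. When the parameters have constant sign, $G$ is definite, hence so is $Q_X$, and Donaldson's diagonalization theorem forces $Q_X\cong\langle\pm1\rangle^{3}$. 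The remaining task is then the purely combinatorial one of classifying all full-rank embeddings of the tridiagonal form $G$ into the standard diagonal rank-$3$ lattice, in the style of Lisca's analysis for $2$-bridge knots and Greene--Jabuka's for $3$-strand pretzels; one expects no such embedding to exist once $|a|,|b|,|c|,|d|\ge2$, and since the family $\{2n,-2n\pm1,m,-m\}$ is never of constant sign this case then produces nothing new.

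For the mixed-sign cases I would first reduce to the definite setting wherever possible by blowing down $\pm1$-framed vertices occurring in Kirby-equivalent presentations of the plumbing bounded by $\Sigma$, which disposes of many sign patterns via Donaldson as above. The genuinely indefinite remainder calls for a finer obstruction: the Heegaard Floer correction terms $d(\Sigma,\mathfrak{s})$ of the Seifert fibered space $\Sigma$, computed from its plumbing description by the Ozsv\'ath--Szab\'o / N\'emethi algorithm, must vanish on a metabolizer of the linking form on $H_1(\Sigma)$ because $\Sigma$ bounds a rational homology ball; the Casson--Gordon signatures of the prime-power cyclic branched covers of $K$ furnish an equivalent obstruction. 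Imposing this on the Seifert invariants of $P(a,b,c,d)$ should pin down that two disjoint pairs among $\{a,b,c,d\}$ sum to $0$ and to $\pm1$, which is exactly the assertion $\{a,b,c,d\}=\{2n,-2n\pm1,m,-m\}$.

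The main obstacle is the lattice-embedding classification in the definite case together with the parallel correction-term computation in the indefinite case: each is a finite but intricate case analysis, and the delicate point is the mixed-sign part, where one must identify the correct metabolizer of the linking form and show that vanishing of the $d$-invariants there is as rigid as claimed. Only the stated implication needs proof here: as noted above, the knots $P(2n,-2n\pm1,m,-m)$ are visibly ribbon, so there is no converse to establish.
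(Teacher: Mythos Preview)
This theorem is not proved in the paper at all: it is stated as a background result and attributed to \cite{Lec13} and \cite{Long14}, with no argument supplied here. So there is no ``paper's own proof'' to compare your proposal against. The paper's contribution begins with Theorem~\ref{maintheorem}, which addresses the residual mutant cases $P(2n,m,-2n\pm1,-m)$ left open by Theorem~\ref{4strand}.

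That said, your sketch does follow the general strategy of the references cited for this result (and of \cite{Lisca07}, \cite{GJ13} before them): pass to the double branched cover, exploit that it bounds a rational homology ball, and combine Donaldson diagonalization/lattice-embedding arguments in the definite case with Heegaard Floer $d$-invariant constraints in the indefinite case. In that sense your plan is the right shape. But as written it is only a plan, not a proof: you explicitly defer both the lattice-embedding classification and the correction-term analysis, and these are precisely where the content lies. A few of your reductions are also too quick. Removing a $\pm1$ parameter by a Kirby move on the plumbing simplifies the double branched cover, but you need the simplification at the level of the knot (or at least to track what the reduced $3$-manifold is the branched cover of). Likewise, ``reordering'' the parameters is not free for $4$-strand pretzels: $P(a,b,c,d)$ and $P(a,c,b,d)$ are generally distinct knots (indeed, distinguishing such mutants in concordance is the whole point of this paper), so any reordering you invoke must be justified either as an isotopy (cyclic permutation or overall reversal) or as a move that preserves the double-branched-cover invariants you are using. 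Finally, asserting that in the constant-sign case no embedding of $G$ into the standard lattice exists for $|a|,|b|,|c|,|d|\ge2$ is exactly the combinatorial heart of the Lisca/Greene--Jabuka style argument, and it needs to be carried out rather than asserted.
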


In particular, the only 4-strand pretzel knots whose smooth slice status is still unresolved are the knots $P(2n, m, -2n \pm 1, -m)$ that are positive mutants of the ribbon knots $P(2n, -2n \pm 1, m, -m)$.  However, the arguments used by Lisca, Greene-Jabuka, Lecuona, and Long in the proofs of the above theorems all rely on smooth sliceness obstructions that are associated to the double branched cover of a knot, and so which automatically vanish on mutants of smoothly slice knots. 

The twisted Alexander polynomials associated to cyclic covers of knots are powerful tools for distinguishing knots from their mutants, even up to topological concordance, 
as demonstrated by Livingston et al in \cite{KL99b}, \cite{HKL10}, and \cite{Liv09}. For example, Herald, Kirk, and Livingston demonstrate in \cite{HKL10}  that the 24 distinct oriented mutants of $P(3,7,9,11,15)$ are mutually distinct in the topological concordance group.

We use twisted Alexander polynomials to show that many 4-strand pretzel knots of the form $P(2n,m,-2n\pm1,-m)$ are not even topologically slice, though their positive mutants $P(2n,-2n \pm 1,m,-m)$ are ribbon.  Note that by considering $-K$ we can assume without loss of generality that $n>0$. 

\begin{thm}\label{maintheorem}
Suppose $n \in \mathbb{N}$ and $m \in \mathbb{Z}$ are such that $m$ is odd and there exists a prime $p$ dividing $m$ such that  
\begin{itemize}
\item 2 is a primitive root mod $p$.
\item $p$ does not divide $2n(2n \pm 1) $
\item $n \geq \frac{p+1}{2}$. 
\end{itemize}
 Also, assume that $(n,p) \neq (3,5)$. 
Then $K_{m,n}^{\pm}= P(2n, m, -(2n\pm 1), -m)$ is not topologically slice.
\end{thm}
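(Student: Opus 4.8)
The plan is to suppose for contradiction that $K=K_{m,n}^{\pm}$ is topologically slice and to contradict the Casson--Gordon sliceness obstruction -- which obstructs \emph{topological} sliceness -- in the twisted-Alexander-polynomial formulation of Kirk and Livingston \cite{KL99a}. The first step is bookkeeping on the double branched cover $\Sigma_2(K)$: since $m$ is odd, $K$ has exactly one even pretzel parameter, hence is a genuine knot, and a direct computation with the pretzel determinant formula $\det P(a_1,\dots,a_k)=\bigl|\sum_i\prod_{j\neq i}a_j\bigr|$ gives $|H_1(\Sigma_2(K))|=m^2$, the four tangle-products largely telescoping because the parameters come in the pairs $(m,-m)$ and $(2n,-(2n\pm1))$. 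If $K$ were slice, the $\mathbb{Q}/\mathbb{Z}$-valued linking form on $H_1(\Sigma_2(K))$ would carry a metabolizer $\mathcal{M}$ of order $m$, and for every prime power $q$ and every character $\chi\colon H_1(\Sigma_2(K))\to\mathbb{Z}/q$ vanishing on $\mathcal{M}$ the associated twisted Alexander polynomial $\tau_{K,\chi}(t)\in\mathbb{Z}[\zeta_q][t^{\pm1}]$ would be forced to be a norm: $\tau_{K,\chi}(t)\doteq f(t)\overline{f(t)}$, where $\doteq$ absorbs the standard indeterminacy -- units of $\mathbb{Z}[\zeta_q][t^{\pm1}]$ together with symmetric correction factors, all of even $t$-width -- and $\overline{\phantom{x}}$ is the involution $t\mapsto t^{-1}$, $\zeta_q\mapsto\zeta_q^{-1}$. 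Because $\tau_{K,\chi}$ records the branch locus and not merely the manifold $\Sigma_2(K)$, this invariant is mutation-sensitive, unlike the untwisted obstructions behind Theorems \ref{3strand} and \ref{4strand}.

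Next I would pin down which characters must be examined. Using $p\nmid 2n(2n\pm1)$, I would read off from a plumbing/surgery presentation of $\Sigma_2(K)$ the $p$-primary summand of $H_1(\Sigma_2(K))$ together with its linking form; it has order $p^{2a}$, where $p^a$ is the exact power of $p$ dividing $m$. For each metabolizer $\mathcal{M}$ the order-$p$ characters vanishing on it are governed by an isotropic subspace of the mod-$p$ linking space, and these isotropic subspaces form a short explicit list. Up to the Galois action $\zeta_p\mapsto\zeta_p^{\,j}$ -- which permutes the polynomials $\tau_{K,\chi}$ without changing whether any one of them is a norm -- this leaves only finitely many characters $\chi$ to analyze (in practice one or two), at which point it suffices to take $q=p$ and to rule out every metabolizer at once.

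The third step is the computation. Applying Fox calculus to the standard presentation of the pretzel knot group (equivalently, working from the surgery description of $\Sigma_2(K)$) expresses $\tau_{K,\chi}(t)$, up to units, as the determinant of an explicit $\mathbb{Z}[\zeta_p][t^{\pm1}]$-matrix assembled from four blocks, one per tangle, each block essentially a truncated geometric progression in $\zeta_p t$. The decisive move is to reduce modulo $2$: since $2$ is a primitive root mod $p$, the cyclotomic polynomial $\Phi_p$ stays irreducible mod $2$, so $2$ is inert in $\mathbb{Z}[\zeta_p]$ and $\mathbb{F}:=\mathbb{Z}[\zeta_p]/(2)\cong\mathbb{F}_{2^{p-1}}$ is a field; thus $\mathbb{F}[t^{\pm1}]$ is a PID whose only units are the monomials $at^{j}$, and $t$-width is additive under multiplication there. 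Over $\mathbb{F}$ the two tangles $m$ and $-m$ collapse (because $p\mid m$) into factors of even $t$-width, while the tangles $2n$ and $-(2n\pm1)$ contribute a factor of \emph{odd} $t$-width growing linearly in $n$; hence $\tau_{K,\chi}(t)\bmod 2$ has odd $t$-width. Here the hypothesis $n\geq\frac{p+1}{2}$ (equivalently $2n>p$) is exactly what guarantees that the mod-$2$ reduction still has odd $t$-width -- that no degree drop of odd size occurs -- and $(n,p)=(3,5)$ is the single borderline equality case in which this genuinely fails, hence is excluded. But over a field a norm $f(t)\overline{f(t)}$ has even $t$-width (the extreme coefficients of $f$ and $\overline{f}$ multiply without cancellation), and the even-width factors hidden in $\doteq$ do not change that. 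So $\tau_{K,\chi}(t)\bmod 2$ is not a norm, for any admissible $\chi$; this contradicts sliceness and proves the theorem.

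The step I expect to be the main obstacle is the third one, and within it the precise control of $\tau_{K,\chi}(t)\bmod 2$ as a function of $n$, $m$, the sign $\pm$, and $\chi$: one must (i) produce the correct twisted Alexander matrix for a four-strand pretzel with a character that is nontrivial across several tangles simultaneously; (ii) verify that the $\pm m$ tangles really do contribute only even-width factors over $\mathbb{F}_{2^{p-1}}$, while the $2n$ and $-(2n\pm1)$ tangles contribute genuinely odd $t$-width with no spurious cancellation -- this is precisely where $2n>p$ and the exclusion of $(3,5)$ are used; and (iii) carry the argument through for the character attached to \emph{each} isotropic subspace of the mod-$p$ linking form, so that no metabolizer survives. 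The arithmetic hypotheses in the statement are exactly the conditions making (i)--(iii) go through.
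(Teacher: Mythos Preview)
Your plan reverses the roles of $2$ and $p$ relative to what the paper does, and this is not a cosmetic difference. You work with the double branched cover $\Sigma_2(K)$ and characters into $\mathbb{Z}/p$, obtaining twisted polynomials in $\mathbb{Q}(\zeta_p)[t^{\pm1}]$ which you then reduce modulo~$2$. The paper instead takes the \emph{$p$-fold} cover $\Sigma_p(K)$ and characters into $\mathbb{Z}/2$, so the twisted polynomial already lives in $\mathbb{Q}[t^{\pm1}]$. The hypothesis ``$2$ is a primitive root mod $p$'' is used there not to make $2$ inert in $\mathbb{Z}[\zeta_p]$, but to make $\sum_{i=0}^{p-1}t^i$ irreducible in $\mathbb{F}_2[t]$, so that $H_1(\Sigma_p(K),\mathbb{Z}_2)$ is an \emph{irreducible} $\mathbb{F}_2[\mathbb{Z}_p]$-module. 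Irreducibility forces every invariant metabolizer to have trivial image mod~$2$, so a \emph{single} character suffices and your step~(iii) (enumerating isotropic subspaces) is bypassed entirely. This is the main structural advantage of the paper's setup.

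The paper then computes that single reduced twisted polynomial explicitly as $f_b(t)\,g_n(t)\,h_k(t)^2(t-1)^{-2}$ with $f_b(t)=2\sum_{i=0}^{2b}t^i+t^b$, and shows it is not a norm in $\mathbb{C}[t^{\pm1}]$ by an argument unrelated to mod-$2$ parity: $f_b$ has all roots on the unit circle (via a result of Lakatos) and is irreducible over $\mathbb{Q}$ (Eisenstein at $2$ applied to the associated degree-$b$ polynomial in $t+t^{-1}$), hence has a simple unit-norm root, which no norm can have. Your interpretations of the remaining hypotheses are off: $n\ge\frac{p+1}{2}$ is what makes $b\ge 1$ in $2n=bp+a$, so that $f_b$ has positive degree and the explicit formula applies; and $(n,p)=(3,5)$ is excluded not because of a mod-$2$ degree drop but because there $f_b=g_n$, so $f_b g_n$ is a perfect square and the polynomial \emph{is} a norm. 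Your outline, while a reasonable Casson--Gordon strategy in the abstract, leaves the decisive computation undone and misidentifies why the hypotheses enter; as written it is not a proof, and the route it sketches is not the one the paper takes.
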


The argument proceeds very similarly in the two cases of $K_{m,n}^+=P(2n, m, -2n-1, -m)$ and $K_{m,n}^{-}=P(2n,m,-2n+1,-m)$. In the following, we focus on the first case $K_{m,n}:=K_{m,n}^{+}$, leaving the precise statement and verification of the corresponding results for $K_{m,n}^-$  almost entirely to the reader. 

\section{Background}

In general, twisted homology can be defined for spaces $X$ which are homotopy equivalent to finite CW complexes as follows. (See \cite{KL99a} and \cite{HKL10} for a more thorough exposition.) 

Let $\tilde{X}$ denote the universal cover of $X$, so $C_*(\tilde{X})$ is acted on by the left by $\pi=\pi_1(X)$. Given $M$ a $(S, \mathbb{Z}[\pi])$ bimodule, the \emph{twisted chain complex} is defined as  $C_*(X, M)= C_*(\tilde{X}) \otimes_{\mathbb{Z}[\pi]} M$. The twisted chain complex $C_*(X,M)$ inherits a left $S$-module structure from $M$, which descends to the \emph{twisted homology} $H_k(X,M)= H_k(C_*(X,M))$. 
In particular, when $S=\mathbb{F}[t^{\pm1}]$ the k$^{th}$ twisted Alexander polynomials of $X$ are defined as follows.

\begin{defn} 
Let $M$ be a $(\mathbb{F}[t^{\pm1}], \mathbb{Z}[\pi])$-bimodule. The $k^{th}$ \emph{twisted Alexander polynomial} $\Delta^k_{X,M}(t)$ associated to $X$ and $M$ is the order of $H_k(X,M)$ as a $\mathbb{F}[t^{\pm1}]$ module. When $k=1$, we often call $\Delta^1_{X,M}(t)=: \Delta_{X,M}(t)$ \emph{the twisted Alexander polynomial}. 
\end{defn}

Note that twisted Alexander polynomials are only defined up to multiplication by units, which for $\mathbb{F}[t^{\pm1}]$ are of the form $\lambda t^j$ for $\lambda \in \mathbb{F}$ and $j \in \mathbb{Z}$.

In particular, we will be interested in the twisted Alexander polynomials of prime power cyclic covers of knot exteriors, with $M= \mathbb{F}[t^{\pm1}] \otimes_\mathbb{F} V$ for $V$ a finite dimensional $\mathbb{F}$-vector space. We will now define some notation (again following that of \cite{HKL10}) to be used throughout:

\begin{enumerate}

\item Given  $V$ a finite dimensional vector space over a field $\mathbb{F}$ and maps $\epsilon: \pi_1(X) \to \mathbb{Z}$ and $\phi: \pi_1(X) \to GL(V)$, then $M= \mathbb{F}[t^{\pm1}] \otimes_\mathbb{F} V$ has the natural left $\mathbb{F}[t^{\pm1}]$-module structure and has right $\mathbb{Z}[\pi_1(X)]$-module structure given by $\epsilon \otimes \phi$; that is,  
\[ (p(t), v) \cdot \gamma= (t^{\epsilon(\gamma)} p(t), v\phi(\gamma)), \text{ for } \gamma \in \pi_1(X)\]
We will often call the corresponding twisted Alexander polynomial  $\Delta_{X, \epsilon \otimes \phi}(t)$. 

\item Given $X, \epsilon, \phi$ as above, the \emph{reduced Alexander polynomial} is  $ \widetilde{\Delta}_{X, \epsilon \otimes \phi}(t)= \Delta_{X, \epsilon \otimes \phi}(t) (t-1)^{-s}, \text{ where } s=0 \text{ if } \phi \text{ is trivial, } s=1 \text{ else. }$

\item For $K$ a knot, let $X(K):=S^3- \nu(K)$ denote the exterior of $K$,  $X_n(K)$ denote the $n$-fold cyclic cover of $X(K)$, and $\Sigma_n(K)$ denote the corresponding $n$-fold branched cover of $S^3$ along $K$.  Finally, in contexts where $K$ is clear, let $\pi=\pi_1(X(K))$ and $\pi_n= \pi_1(X_n(K))$. 

\item Let $\epsilon: \pi_1(X(K)) \to H_1(X(K)) \cong \mathbb{Z}$ be the Hurewicz abelianization map. Note that $\epsilon$  maps $\pi_1(X_n(K)) \subset \pi_1(X(K))$ onto $n\mathbb{Z} \subset \mathbb{Z}$, so we can define
 $\epsilon_n: \pi_n \twoheadrightarrow \mathbb{Z}$ as the composition
$\epsilon_n: \pi_n \hookrightarrow \pi \xrightarrow{\epsilon} n\mathbb{Z} \twoheadrightarrow \mathbb{Z}$.

\end{enumerate}

\begin{defn}
Let $\mathbb{F} \subseteq \mathbb{C}$ . Define an involution of $\mathbb{F}[t^{\pm1}]$ by
\[\bar{ } : \mathbb{F}[t^{\pm1}]  \to  \mathbb{F}[t^{\pm1}] \text{, } f(t)=\sum_{j=m}^{n} a_j t^j  \mapsto \sum_{j=m}^n \overline{a_j} t^{-j}=\overline{f(t)}\]
A polynomial $g(t) \in \mathbb{F}[t^{\pm1}]$ is a \emph{norm in }$\mathbb{F}[t^{\pm1}]$ if $g(t)=\lambda t^k f(t)\overline{f(t)}$ for some $\lambda \in \mathbb{F}$, $k \in \mathbb{Z}$, and $f(t) \in \mathbb{F}[t^{\pm1}]$. 

\end{defn}

We will now state the major obstruction to sliceness coming from twisted Alexander polynomials. 
First, observe that given any $\chi: H_1(X_n) \to \mathbb{Z}_{m}$ and $\xi_m$ a primitive $m^{th}$ root of unity, there is
$ \phi_\chi: \pi_n \xrightarrow{ab} H_1(X_n)\to\mathbb{Q}(\xi_m)^{\times}=GL(\mathbb{Q}(\xi_m))$ given by $\phi_\chi(\gamma)= \xi_m^{\chi(\gamma)}$. Note that, here and otherwise, we will abuse notation by using $\gamma$ to refer to both an element of $\pi_1(X_n)$ and its image in $H_1(X_n)$.

In \cite{KL99a}, the following theorem is proved by establishing a relationship between twisted Alexander polynomials of $X_n(K)$ and corresponding twisted Reidemeister torsions of $\Sigma_n(K)$, and then using duality results for Reidemeister torsion.

\begin{thm}[\cite{KL99a}]\label{notsliceqodd}
Let $K$ be a topologically slice knot and $p, q$ be distinct primes, $q \neq 2$. Let $m=p^r$ and $d=q^s$ be prime powers.
 Then there exists an invariant metabolizer $M< H_1(\Sigma_m(K))$ such that for any $\chi: H_1(X_m(K)) \to \mathbb{Z}_d$ that factors through $H_1(\Sigma_m(K))$ and vanishes on $M$, 
  the corresponding reduced twisted Alexander polynomial 
  $\widetilde{\Delta}_{X_m, \epsilon_m \otimes \phi_\chi}(t) \in \mathbb{Q(}\xi_d)[t^{\pm1}]$
factors as a norm in $\mathbb{Q}(\xi_d)[t^{\pm1}]$. 
\end{thm}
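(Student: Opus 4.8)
The plan is to run the classical Casson--Gordon argument, upgraded from Witt classes of linking pairings to Reidemeister torsion as indicated in \cite{KL99a}. Throughout, write $f \doteq g$ for equality in $\mathbb{Q}(\xi_d)(t)^{\times}$ up to a unit $\lambda t^k$.

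\emph{Step 1: the metabolizer and the bounding $4$--manifold.} Since $K$ is topologically slice, fix a locally flat slice disc $D \subset B^4$ with $\partial D = K$, and let $W$ be the $m$--fold cyclic cover of $B^4$ branched over $D$, so $\partial W = \Sigma_m(K)$ and the deck transformation of $\Sigma_m(K) \to S^3$ extends over $W$. Because $m = p^r$ is a prime power, the standard argument applies: the $m$--fold \emph{unbranched} cover of $B^4 - \nu(D)$ has the $\mathbb{Z}_p$--homology of a circle by Milnor's exact sequence for cyclic covers, hence $W$ is a $\mathbb{Z}_p$--homology $4$--ball and in particular $H_1(W;\mathbb{Q}) = 0$. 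A half-lives-half-dies argument using the long exact sequence of $(W,\Sigma_m(K))$ and Poincar\'e--Lefschetz duality then shows that $M := \ker\bigl(H_1(\Sigma_m(K);\mathbb{Z}) \to H_1(W;\mathbb{Z})\bigr)$ has order $\sqrt{|H_1(\Sigma_m(K))|}$ and is a metabolizer for the $\mathbb{Q}/\mathbb{Z}$--linking form of $\Sigma_m(K)$; naturality of the branched cover construction makes $M$ invariant under the deck transformation. This $M$ is the metabolizer asserted in the theorem.

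\emph{Step 2: extending the coefficient system over $W$.} Let $\chi\colon H_1(X_m(K)) \to \mathbb{Z}_d$ factor through $H_1(\Sigma_m(K))$ and vanish on $M$. Then $\chi$ descends to $H_1(\Sigma_m(K))/M$, which injects into $H_1(W;\mathbb{Z})$ with cokernel a $p$--group; since $d = q^s$ with $q \neq p$, the functor $\mathrm{Hom}(-,\mathbb{Z}_d)$ is exact on this short sequence, so $\chi$ extends to $\widetilde{\chi}\colon H_1(W;\mathbb{Z}) \to \mathbb{Z}_d$. Because the Hurewicz map $\epsilon$ is the restriction of the abelianization $\pi_1(B^4 - \nu(D)) \to \mathbb{Z}$, the pair $(\epsilon, \widetilde{\chi})$ assembles into a representation $\pi_1(W) \to GL_1\bigl(\mathbb{Q}(\xi_d)[t^{\pm1}]\bigr)$, $\gamma \mapsto t^{\epsilon(\gamma)}\xi_d^{\widetilde{\chi}(\gamma)}$, restricting on $\partial W = \Sigma_m(K)$ to the system underlying $\epsilon_m \otimes \phi_\chi$. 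A short homological computation---using $H_*(W;\mathbb{Q}) \cong H_*(\mathrm{pt})$, the unit $t$ to kill $H_0$ (this is exactly the role of the $(t-1)^{-s}$ factor in the reduced polynomial), and $H_*(\partial W;\mathbb{Q}(\xi_d)(t))$ torsion---shows that $H_*(W;\mathbb{Q}(\xi_d)(t)) = 0$, so the Reidemeister torsion $\tau(W) \in \mathbb{Q}(\xi_d)(t)^{\times}$ is defined.

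\emph{Step 3: torsion--polynomial identification and duality.} By the main device of \cite{KL99a}, $\widetilde{\Delta}_{X_m, \epsilon_m \otimes \phi_\chi}(t) \doteq \tau\bigl(\Sigma_m(K), \epsilon_m \otimes \phi_\chi\bigr)$: this is a Milnor--Turaev type computation of the torsion of the closed $3$--manifold $\Sigma_m(K)$ from the twisted Alexander polynomial of the knot exterior $X_m(K)$, where the hypothesis $q \neq 2$ supplies the required nondegeneracy. Now multiplicativity of torsion for the pair $(W, \Sigma_m(K))$ gives $\tau(\Sigma_m(K)) \doteq \tau(W)\,\tau(W,\Sigma_m(K))$, and Poincar\'e--Lefschetz duality for torsion on the $4$--manifold $W$ gives $\tau(W,\Sigma_m(K)) \doteq \overline{\tau(W)}$ up to a norm; hence $\tau(\Sigma_m(K)) \doteq \tau(W)\,\overline{\tau(W)}$ up to a norm, i.e.\ $\tau(\Sigma_m(K))$, and therefore $\widetilde{\Delta}_{X_m, \epsilon_m \otimes \phi_\chi}(t)$, is a norm in $\mathbb{Q}(\xi_d)[t^{\pm1}]$, which is a PID so the notion is well behaved. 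The main obstacle is the homological bookkeeping around the prime--power branched cover: verifying that $W$ is a rational homology ball and that $M$ has order $\sqrt{|H_1(\Sigma_m(K))|}$ \emph{without} assuming $p \nmid |H_1(\Sigma_m(K))|$, extending $\chi$ across $W$, and carefully tracking the unit and conjugation ambiguities through the multiplicativity and duality formulas for Reidemeister torsion over $\mathbb{Q}(\xi_d)[t^{\pm1}]$.
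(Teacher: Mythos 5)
The paper does not prove this theorem; it is quoted from \cite{KL99a}, with only the one-line indication that the proof identifies $\widetilde{\Delta}_{X_m,\epsilon_m\otimes\phi_\chi}$ with a twisted Reidemeister torsion of $\Sigma_m(K)$ and then applies duality. Your Steps 1 and 3 reproduce that architecture correctly (prime-power branched cover $W$ of the slice disc is a $\mathbb{Z}_p$-homology ball, $M=\ker(H_1(\Sigma_m)\to H_1(W))$ is an invariant metabolizer, torsion is multiplicative and dual over $W$).

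The genuine gap is in Step 2, and it sits exactly at the point that separates this theorem from Theorem \ref{notsliceq2}. You assert that $H_1(\Sigma_m(K))/M$ injects into $H_1(W;\mathbb{Z})$ \emph{with cokernel a $p$-group}, so that $\mathrm{Hom}(-,\mathbb{Z}_d)$ is exact and $\chi$ extends to a $\mathbb{Z}_d$-valued character of $H_1(W)$. That cokernel claim is unjustified and false in general: by the long exact sequence of the pair and duality it is controlled by torsion in $H_2(W)$, which need not be a $p$-group. The correct statement (Casson--Gordon's extension lemma, used in \cite{KL99a}) is that $\chi$ extends only after enlarging the target, to some $\widetilde{\chi}\colon H_1(W)\to\mathbb{Z}_{q^{s'}}$ with $s'\geq s$. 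Your torsion argument then a priori yields only that $\widetilde{\Delta}$ is a norm in $\mathbb{Q}(\xi_{q^{s'}})[t^{\pm1}]$, and you are missing the final descent step --- valid because for $q$ odd the extension $\mathbb{Q}(\xi_{q^{s'}})/\mathbb{Q}(\xi_{q^s})$ has odd degree --- that brings the norm condition down to $\mathbb{Q}(\xi_d)[t^{\pm1}]$. Relatedly, you misplace the hypothesis $q\neq 2$: it is not needed for ``nondegeneracy'' in the Milnor--Turaev identification, but precisely for this descent. Indeed, if your Step 2 were correct as written, Theorem \ref{notsliceq2} would also conclude with a norm in $\mathbb{Q}(\xi_d)[t^{\pm1}]$ rather than in ``some $\mathbb{Q}(\xi_{2^n})[t^{\pm1}]$,'' contradicting the paper's explicit remark that this is the only difference between the two statements.
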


However, as observed by Long in \cite{Long14}, the pretzel knots $K_{m,n}^{\pm}$ have only 2-torsion in their prime power cyclic branched covers. 
So we will need the following theorem, which follows immediately from the proof of Theorem \ref{notsliceqodd}, as observed by \cite{Liv09}. 

\begin{thm}[\cite{KL99a}]\label{notsliceq2}
Let $K$ be a topologically slice knot,  $p \neq 2$ prime, $m=p^r$ and $d=2^s$.  
Then there exists an invariant metabolizer $M < H_1(\Sigma_m(K))$ such that for any $\chi: H_1(X_m(K)) \to \mathbb{Z}_d$ that factors through $H_1(\Sigma_m(K))$ and vanishes on $M$, the corresponding reduced twisted Alexander polynomial $\widetilde{\Delta}_{X_m, \epsilon_m \otimes \phi_\chi}(t) \in \mathbb{Q}(\xi_d)[t^{\pm1}]$ factors as a norm in some $\mathbb{Q}(\xi_{2^n})[t^{\pm1}]$.  
\end{thm}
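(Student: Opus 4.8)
The plan is to reproduce the proof of Theorem~\ref{notsliceqodd} from \cite{KL99a} essentially verbatim, with $\mathbb{Q}(\xi_{2^n})$ in place of $\mathbb{Q}(\xi_d)$, and to isolate the one point at which the hypothesis $q \neq 2$ enters. First I would recall the standard geometry: given a locally flat slice disc $D \subset B^4$ for $K$, let $W = B^4 \setminus \nu(D)$, so that $\partial W = X(K)$ and $H_1(W) \cong \mathbb{Z}$ is generated by a meridian; let $W_m \to W$ be the $m$-fold cyclic cover (with $\partial W_m = X_m(K)$) and let $V_m \to B^4$ be the $m$-fold branched cover over $D$, so that $\partial V_m = \Sigma_m(K)$. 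Since $m = p^r$ is a prime power, no $m$-th root of unity is a zero of $\Delta_K$ (otherwise the integer $p = \Phi_{p^k}(1)$ would divide $\Delta_K(1) = \pm 1$), so $\Sigma_m(K)$ is a rational homology sphere and $V_m$ is a rational homology $4$-ball. Poincar\'e--Lefschetz duality for the pair $(V_m, \Sigma_m(K))$ then shows in the usual way that $M := \ker\bigl(H_1(\Sigma_m(K)) \to H_1(V_m)\bigr)$ is a metabolizer for the linking form, that is, $|M|^2 = |H_1(\Sigma_m(K))|$ and $M = M^{\perp}$; and $M$ is invariant under the deck action because the covering transformation of $\Sigma_m(K)$ extends over $V_m$. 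This is the metabolizer asserted in the statement.

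Next, fix $\chi \colon H_1(X_m(K)) \to \mathbb{Z}_d$ factoring through $H_1(\Sigma_m(K))$ and vanishing on $M$. Since $\chi$ kills $M = \ker\bigl(H_1(\Sigma_m(K)) \to H_1(V_m)\bigr)$, it factors through the image of $H_1(\Sigma_m(K))$ in $H_1(V_m)$, and hence, by injectivity of $\mathbb{Q}/\mathbb{Z}$, extends to a character of $H_1(V_m)$ and so, through the map $H_1(W_m) \to H_1(V_m)$, to a character $\tilde\chi$ of $\pi_1(W_m)$ restricting to $\chi$ on $\pi_1(X_m(K))$. The image of $\tilde\chi$ is a finite cyclic group, and since $\chi$ has $2$-power order (and, in the cases to which we apply this, $H_1(\Sigma_m(K))$ is itself a $2$-group) we may take that image to be $\mathbb{Z}_{2^n}$ for some $n \geq s$ --- but not necessarily $n = s$. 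This is the sole place the argument differs from the odd case: the hypothesis $q \neq 2$ in Theorem~\ref{notsliceqodd} is exactly what allows the extension of $\chi$ over the slicing $4$-manifold to be kept inside $\mathbb{Z}_d = \mathbb{Z}_{q^s}$, so that the coefficient field is $\mathbb{Q}(\xi_d)$; at the prime $2$ one instead lands in $\mathbb{Q}(\xi_{2^n})$ for the $n$ above. This is the observation of \cite{Liv09}, and no other change is needed.

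Finally I would run the Reidemeister-torsion duality argument of \cite{KL99a} over $\mathbb{Q}(\xi_{2^n})$. Equip $W_m$ with the twisted coefficient module $\mathbb{Q}(\xi_{2^n})[t^{\pm1}]$ determined by $\phi_{\tilde\chi}$ together with the meridian abelianization extending $\epsilon_m$. If the twisted homology of $X_m(K)$ is not $\mathbb{Q}(\xi_{2^n})[t^{\pm1}]$-torsion, then $\widetilde{\Delta}_{X_m, \epsilon_m \otimes \phi_\chi}(t) = 0$, which is vacuously a norm; otherwise the twisted chain complex of $X_m(K)$, and hence --- since $V_m$ is a rational homology ball --- that of $W_m$, is acyclic over the fraction field $\mathbb{Q}(\xi_{2^n})(t)$, so the relevant Reidemeister torsions are defined. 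By the dictionary of \cite{KL99a} relating twisted Alexander polynomials to Reidemeister torsion, $\widetilde{\Delta}_{X_m, \epsilon_m \otimes \phi_\chi}(t)$ agrees up to units with the twisted torsion of $X_m(K)$; and Milnor's duality theorem applied to the $4$-manifold $W_m$, together with multiplicativity of torsion, gives $\tau(X_m(K)) \doteq \tau(W_m)\,\overline{\tau(W_m)}$ in $\mathbb{Q}(\xi_{2^n})(t)$. Clearing denominators then exhibits $\widetilde{\Delta}_{X_m, \epsilon_m \otimes \phi_\chi}(t)$ as a norm in $\mathbb{Q}(\xi_{2^n})[t^{\pm1}]$.

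The only real work is the bookkeeping of the previous step: one must verify that the hypothesis $q \neq 2$ in \cite{KL99a} is used solely to bound the order of the extended character, so that the identification of $\widetilde{\Delta}$ with a twisted torsion, the acyclicity over the fraction field, and the $4$-manifold duality computation all go through unchanged over an arbitrary $2$-power cyclotomic field. Since \cite{KL99a} is written so as to isolate this dependence, and \cite{Liv09} records precisely this point, the verification is routine.
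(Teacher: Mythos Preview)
Your proposal is correct and matches the paper's approach exactly: the paper does not give an independent proof but simply states that the result ``follows immediately from the proof of Theorem~\ref{notsliceqodd}, as observed by \cite{Liv09},'' and notes that the only difference from the odd-$q$ case is that the norm factorization is only guaranteed over some larger $\mathbb{Q}(\xi_{2^n})$ rather than $\mathbb{Q}(\xi_d)$ itself. Your outline faithfully unpacks this citation, correctly isolating the extension-of-character step as the unique point where $q\neq 2$ is used in \cite{KL99a}.
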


Note that the difference between the two theorems comes in whether we can assume that the reduced twisted Alexander polynomial factors as a norm over the field $\mathbb{Q}(\xi_d)$ that its coefficients naturally lie in (as in Theorem \ref{notsliceqodd}) or only in some larger cyclotomic extension (Theorem \ref{notsliceq2}).  We will be interested in Theorem \ref{notsliceq2} in the case $d=2$, when the reduced twisted Alexander polynomial will lie in $\mathbb{Q}[t^{\pm1}]$ and we will need to obstruct its factoring as a norm in $\mathbb{Q}(\xi_{2^n})[t^{\pm1}]$ for any $n \in \mathbb{N}$. In fact, we will show that the resulting reduced polynomials do not even factor as norms in $\mathbb{C}[t^{\pm1}]$, relying heavily on the fact that all coefficients are real. 

\medskip

In our application of Theorem \ref{notsliceq2}, we will rely on the observation of \cite{HKL10} that when $H=H_1(\Sigma_m(K), \mathbb{Z}_d)$ is irreducible as a $\mathbb{F}_d[\mathbb{Z}_m]$-module, any invariant metabolizer $M< H_1(\Sigma_m (K))$ must have trivial image $\overline M < H$. So any $\chi: H_1(X_m(K)) \to H_1(\Sigma_m(K))\to \mathbb{Z}_d$ must vanish on $M$, and if $K$ is slice then the reduced twisted Alexander polynomial associated to such a $\chi$ must factor as a norm. Therefore, when  $H$ is irreducible the computation of a single twisted Alexander polynomial can obstruct $K$'s sliceness. 
However, when $H$ is not irreducible a more involved decomposition of $H$ into irreducible components,  analysis of potential metabolizers,  construction of characters vanishing on said metabolizers, and computation of the corresponding twisted Alexander polynomials is required.\footnote{Example computations suggest that in the cases of interest the relevant twisted Alexander polynomials also increase significantly in complexity.}

In this context, our requirements that 2 is a primitive root mod $p$, that $p$ divides $m$, and that $p$ does not divide $2n(2n+1)$ are exactly those that establish that $H_1(\Sigma_p(K_{m,n}), \mathbb{Z}_2)$ is a nontrivial irreducible $\mathbb{F}_2[\mathbb{Z}_p]$-module (Lemma \ref{homcomp}) and hence exactly those that
allow us to obstruct the sliceness of $K_{m,n}$ by computing a single twisted Alexander polynomial. Note that our requirement that $n \geq \frac{p+1}{2}$ is not relevant to irreducibility; however, when $n< \frac{p+1}{2}$, the twisted Alexander polynomials we compute are norms even in $\mathbb{Q}[t^{\pm1}]$. 

\medskip

Before proving our result, we will need some computational results from \cite{Wada94} and \cite{HKL10}. 

\subsection{Computing with Fox derivatives}
First, Wada\footnote{Note that Wada's definition of a twisted Alexander polynomial differs from the one given above-- an equivalence is proven in \cite{KL99a}.} provides a way to compute twisted Alexander polynomials via Fox derivatives.

Suppose that $\pi_1(X)= \langle x_1, \dots, x_s : r_1, \dots, r_t \rangle$. Let $\rho: \pi_1(X) \to GL_n(\mathbb{F})$ and $\epsilon: \pi_1(X) \to \mathbb{Z}$ be nontrivial. Let 
$\Phi$ be the composition 
\[\Phi: \mathbb{Z}[\langle x_1, \dots, x_s \rangle] \twoheadrightarrow \mathbb{Z}[\pi] \xrightarrow{\epsilon \otimes \rho} M_n(\mathbb{F}[\mathbb{Z}]).\]

Then the  twisted homology $H_*(\pi, \mathbb{F}[\mathbb{Z}]^n)$ can be computed via chain complex
\[ \dots \to \left( \mathbb{F}[\mathbb{Z}]^n \right)^t \xrightarrow{\delta_2}  \left( \mathbb{F}[\mathbb{Z}]^n \right)^s
\xrightarrow{\delta_1}  \mathbb{F}[\mathbb{Z}]^n \to 0 \]
\begin{align*} \text{ where }
\delta_2= \left[\Phi \left( \frac{ \partial r_i}{\partial x_j} \right) \right] _{tn,sn}
 \text{ and } 
 \delta_1= \left[ \begin{array}{c} \Phi(x_1-1) \\ \vdots \\ \Phi(x_s-1) \end{array} \right].
\end{align*}

\begin{thm}[\cite{Wada94}, \cite{KL99a}] \label{foxder}
With the setup above, there is some $j$ such that $\Phi(x_j-1)$ has nonzero determinant. 
Let $p_j: (\mathbb{F}[\mathbb{Z}]^n)^s \to  (\mathbb{F}[\mathbb{Z}]^n)^{s-1}$ be the projection with kernel the $j$-th copy of $\mathbb{F}[\mathbb{Z}]^n$.  
Define $Q_j \in \mathbb{F}[\mathbb{Z}]$ to be the greatest common divisor of the 
$n(s-1) \times n(s-1)$ subdeterminants of the matrix for $p_j \circ \delta_2: (\mathbb{F}[\mathbb{Z}]^n)^t \to  (\mathbb{F}[\mathbb{Z}]^n)^{s-1}$. 
Then, when $H_1(X, \mathbb{F}^n[\mathbb{Z}])$ is torsion,
\[ \Delta_1 (X) = Q_j \frac{ \Delta_0(X)}{\det(\Phi(x_j-1))}\]
\end{thm}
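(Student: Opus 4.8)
The plan is to recover Wada's formula by reinterpreting his ``Alexander invariant'' as an alternating product of orders of homology modules over the principal ideal domain $R := \mathbb{F}[\mathbb{Z}] = \mathbb{F}[t^{\pm 1}]$. Classically (Fox's free differential calculus applied to the presentation $2$-complex of the given presentation of $\pi$), the displayed complex $(R^n)^t \xrightarrow{\delta_2} (R^n)^s \xrightarrow{\delta_1} R^n$ computes $H_*(X;R^n)$; thus $H_1 = \ker\delta_1/\operatorname{im}\delta_2$, $H_0 = \operatorname{coker}\delta_1$, and $\Delta_i(X) \doteq |H_i|$, the order as an $R$-module, well defined up to units of $R$. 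Everything below is bookkeeping with such orders, using multiplicativity in short exact sequences and the fact that over the PID $R$ the gcd of the maximal minors of a matrix equals the top determinantal divisor, i.e.\ the product of its invariant factors.

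First I would settle the existence of $j$. Since $\epsilon$ is nontrivial there is an $x_j$ with $e := \epsilon(x_j)\neq 0$; then $\Phi(x_j-1) = t^{e}\rho(x_j)-I$, and expanding its determinant in $t$ shows it is a nonzero Laurent polynomial (one of its extreme coefficients is $\pm\det\rho(x_j)\neq 0$, as $\rho(x_j)\in GL_n(\mathbb{F})$), so $\det\Phi(x_j-1)\neq 0$ in $R$. Fix this $j$ and write $A_i := \Phi(x_i-1)$. Over the fraction field $K=\mathbb{F}(t)$ the block $A_j$ is invertible, so $\delta_1\otimes K$ is onto and $H_0$ is automatically $R$-torsion; together with the hypothesis that $H_1$ is torsion, a rank count over $K$ gives $\operatorname{rank}\delta_2 = \dim_K\ker(\delta_1\otimes K) = n(s-1)$, so in particular $s\le t+1$ and the $n(s-1)\times n(s-1)$ minors in the statement are genuine maximal minors.

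The heart of the argument is then a two-part order computation. Let $D_j\cong R^n$ be the $j$-th summand of $C_1=(R^n)^s$. Injectivity of $A_j$ forces $D_j\cap\ker\delta_1=0$, so $p_j$ restricts to an injection $\ker\delta_1\hookrightarrow (R^n)^{s-1}$ with image $L$, carrying $\operatorname{im}\delta_2$ onto $\operatorname{im}(p_j\circ\delta_2)$; hence $H_1\cong L/\operatorname{im}(p_j\circ\delta_2)$. Part one: since $H_1$ is torsion, $p_j\circ\delta_2$ has finite cokernel of order $\doteq Q_j$, and writing $R^{n(s-1)}/\operatorname{im}(p_j\circ\delta_2)$ as an extension of $L/\operatorname{im}(p_j\circ\delta_2)\cong H_1$ by $R^{n(s-1)}/L$ yields $Q_j \doteq |R^{n(s-1)}/L|\cdot\Delta_1(X)$. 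Part two: I would realize $L$ as the kernel of the $R$-linear map $(R^n)^{s-1}\to K^n/R^n$ sending $(v_i)_{i\neq j}$ to $-\bigl(\sum_{i\neq j}v_iA_i\bigr)A_j^{-1}\bmod R^n$, whose image is $(NA_j^{-1}+R^n)/R^n$ with $N:=\sum_{i\neq j}R^nA_i\subseteq R^n$. Inside $K^n$ one then has the nested chain $R^n\subseteq R^n+NA_j^{-1}\subseteq R^nA_j^{-1}$, where $|R^nA_j^{-1}/R^n|\doteq\det A_j$ and, multiplying through by $A_j$, $R^nA_j^{-1}/(R^n+NA_j^{-1})$ is identified with $R^n/(R^nA_j+N)=\operatorname{coker}\delta_1=H_0$. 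Multiplicativity of orders gives $|R^{n(s-1)}/L| = |(R^n+NA_j^{-1})/R^n| \doteq \det\Phi(x_j-1)/\Delta_0(X)$, and substituting into the identity from part one produces $\Delta_1(X)\doteq Q_j\,\Delta_0(X)/\det\Phi(x_j-1)$, as claimed.

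I expect the main obstacle to be part two of the last paragraph: one must be careful that $NA_j^{-1}$ lies in $R^nA_j^{-1}$ but not in $R^n$ in general, check that every quotient appearing in the nested chain is genuinely finite --- this is exactly where the two torsion inputs, namely invertibility of $A_j$ over $K$ and torsionness of $H_1$, are used --- and keep all the ``$\doteq$'' bookkeeping consistent up to units of $R$. Everything else (the identification of the Fox complex with $C_*(X;R^n)$, the passage from gcd-of-minors to cokernel order via Smith normal form, and the manipulation of the exact sequences) is standard.
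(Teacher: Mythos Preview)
The paper does not supply its own proof of this theorem: it is stated as a result of Wada and Kirk--Livingston, cited to \cite{Wada94} and \cite{KL99a}, and then applied. So there is nothing in the paper to compare against beyond the bare statement.

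Your argument is correct and is essentially the standard one. The existence of a good $j$ is handled exactly as Wada does, and the identification $Q_j \doteq |\operatorname{coker}(p_j\circ\delta_2)|$ via Smith normal form is the right interpretation of the gcd of maximal minors. The two-step order computation is clean: the short exact sequence $0\to H_1\to R^{n(s-1)}/\operatorname{im}(p_j\circ\delta_2)\to R^{n(s-1)}/L\to 0$ gives $Q_j \doteq \Delta_1\cdot|R^{n(s-1)}/L|$, and your realization of $R^{n(s-1)}/L$ as $(R^n+NA_j^{-1})/R^n$ inside $R^nA_j^{-1}$, with complementary factor $R^nA_j^{-1}/(R^n+NA_j^{-1})\cong R^n/\operatorname{im}\delta_1=H_0$, gives $|R^{n(s-1)}/L|\doteq\det A_j/\Delta_0$. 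This is a tidy repackaging of the Kirk--Livingston treatment, which proceeds via Milnor--Turaev torsion of the chain complex rather than by the direct order bookkeeping you use; both routes ultimately exploit the same multiplicativity. One minor caution: be sure your left/right conventions for the Fox matrix and for the module action match those in the paper (row vectors and right multiplication, with $M$ a right $\mathbb{Z}[\pi]$-module), since the paper's notation is slightly nonstandard; your computation is written for that convention and is consistent with it.
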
 

In our case, we will have a generator $x_j$ in $\pi_1(K)$ with $\chi(x_j)=0$ and $\epsilon(x_j)=1$,  so $\Delta_0(X)=1$. In addition, we will choose $\rho$ so that for some generator $x_j$, we have $\det(\Phi(x_j-1))=1-t$. Finally, we will work with a reduced Wirtinger presentation, which has deficiency one and hence eliminates the need to take greatest common divisors.  So we will have $\Delta_1(X)= \det \Phi(Z) (1-t)^{-1}$, where $Z$ is obtained from $\left[ \frac{\partial r_i}{\partial x_j} \right]_{s-1,s} $ by deleting the column corresponding to $x_j$.

\subsection{Covers and Shapiro's lemma}

We will also use the following theorem of \cite{HKL10} that relates certain twisted Alexander polynomials of covers to those of the base space. 

Let $p,q$ be distinct primes.
Recall that we have 
\begin{itemize}
\item $X=X(K)$ with $\pi=\pi_1(X)$.
\item A canonical $\epsilon: \pi \to \mathbb{Z}= \langle x \rangle$ inducing $p$-fold cyclic cover $X_p \to X$ and corresponding surjection $\epsilon_p: \pi_1(X_p) \to \mathbb{Z}$. 
\item A choice of meridian $\mu \in \pi$ with $\epsilon(\mu)=1$. 
\end{itemize}
Now, suppose that we have an irreducible $\mathbb{F}_q[\mathbb{Z}_p]$-module $V$, 
 a nonzero equivariant\footnote{i.e. $\rho(\mu \gamma \mu^{-1}) = t \cdot \rho(\gamma)$ for any $\gamma \in \pi_1(X_p)$ and $\mu$ our preferred meridian.}
 homomorphism $\rho: \pi_1(X_p) \to V$,
and 
a $\mathbb{Z}_q$-vector space homomorphism $\chi: V \to \mathbb{Z}_q$.
We would like to compute the twisted Alexander polynomial $\Delta_{X_p, \epsilon_p \otimes \rho_\chi}(t)$.

First, note that
there is a group structure on $\mathbb{Z} \ltimes V$ given by $(x^i,v) \cdot (x^j, w):= (x^{i+j}, t^{-j}\cdot v+w)$), where the action of $t$  on $V$ is given by $V$'s structure as a $\mathbb{F}_q[\mathbb{Z}_p]$-module. Since $\rho$ is equivariant, there is a well-defined extension of $\epsilon|_{\pi_p} \times \rho: \pi_p \to p\mathbb{Z} \times V$ to a homomorphism $\tilde{\rho}: \pi \to \mathbb{Z} \ltimes V$ defined by 
$ \tilde{\rho}(\gamma)= (x^{\epsilon(\gamma)}, \rho(\mu^{-\epsilon(\gamma)}\gamma)).$
(In fact,  \cite{HKL10} shows that this defines a bijection between equivariant $\rho$ and homomorphisms $\tilde{\rho}$ with $\tilde{\rho}(\mu)=(x,0)$.)

Now, define a map $\Phi: \pi_1(X) \to GL_p(\mathbb{Q}(\xi_q)[t^{\pm1}])$ as the composition of $\tilde{\rho}$ with the following map $\mathbb{Z} \ltimes V \to GL_p(\mathbb{Q}(\xi_q)[t^{\pm1}])$:
\[
(x^j, v) \mapsto
 \left[
 \begin{array}{cccc}
 0& 1&\cdots & 0 \\
 \vdots & \vdots & \ddots & \vdots\\
 0 & 0 & \cdots & 1\\
 t & 0 & \cdots &0
 \end{array}
 \right]^j
 \left[
 \begin{array}{cccc}
 \xi_q^{\chi(v)} & 0 & \cdots & 0\\
 0 & \xi_q^{\chi(t \cdot v)}& \cdots &0 \\
 \vdots & \vdots & \ddots& \vdots \\
 0 & 0 & \cdots & \xi_q^{\chi(t^{p-1} \cdot v)} 
 \end{array}
 \right]
\]

\begin{thm}[\cite{HKL10}]\label{covercomp}
Let $X, X_p, \epsilon, \rho,$ and $\Phi$ be as above, where
\begin{itemize}
\item
 $\epsilon \otimes \rho_\chi: \pi_1(X_p) \to GL_1(\mathbb{Q}(\xi_q)[t^{\pm1}])$ gives $\mathbb{Q}(\xi_q)[t^{\pm1}]$ a $(\mathbb{Q}(\xi_q)[t^{\pm1}], \mathbb{Z}[\pi_1(X_p)])$- bimodule structure
\item $\Phi: \pi_1(X) \to GL_p(\mathbb{Q}(\xi_q)[t^{\pm1}])$ gives $(\mathbb{Q}(\xi_q)[t^{\pm1}])^p$ a $(\mathbb{Q}(\xi_q)[t^{\pm1}], \mathbb{Z}[\pi_1(X)])$- bimodule structure.
\end{itemize}
Then the corresponding twisted homology groups $H_1(X_p,\mathbb{Q}(\xi_q)[t^{\pm1}])$ and $H_1(X, (\mathbb{Q}(\xi_q)[t^{\pm1}])^p)$ are isomorphic as $\mathbb{Q}(\xi_q)[t^{\pm1}]$-modules, and so
$ \Delta_{X_p, \epsilon \otimes \rho_\chi}(t) = \Delta_{X, \Phi}(t)$ as well. 
\end{thm}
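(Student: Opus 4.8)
The plan is to recognize Theorem~\ref{covercomp} as a concrete incarnation of Shapiro's lemma (the Eckmann--Shapiro/induced-representation lemma): the rank-$p$ coefficient module $(\mathbb{Q}(\xi_q)[t^{\pm1}])^p$ with the $\pi$-action $\Phi$ is, after a choice of basis, exactly the module \emph{induced} from the $\pi_p$-module $\mathbb{Q}(\xi_q)[t^{\pm1}]$ defining $\Delta_{X_p,\epsilon\otimes\rho_\chi}(t)$, and for a finite-index cover this induced module on the base computes the same homology as the original module on the cover.

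First I would set up the reduction. The cover $X_p \to X$ corresponds to the normal subgroup $\pi_p = \epsilon^{-1}(p\mathbb{Z})$ of $\pi$, so $\widetilde{X_p} = \widetilde X$ and $C_*(\widetilde X)$ carries compatible $\pi_p$- and $\pi$-actions. Writing $M$ for the bimodule $\mathbb{Q}(\xi_q)[t^{\pm1}]$ with its right $\mathbb{Z}[\pi_p]$-structure $\epsilon_p\otimes\rho_\chi$, and $N := M\otimes_{\mathbb{Z}[\pi_p]}\mathbb{Z}[\pi]$ for the induced $(\mathbb{Q}(\xi_q)[t^{\pm1}],\mathbb{Z}[\pi])$-bimodule, associativity of the tensor product over the ring inclusion $\mathbb{Z}[\pi_p]\hookrightarrow\mathbb{Z}[\pi]$ gives a natural isomorphism of $\mathbb{Q}(\xi_q)[t^{\pm1}]$-chain complexes $C_*(X,N) \cong C_*(X_p,M)$. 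Hence it suffices to show that $N$, with its intrinsic $(\mathbb{Q}(\xi_q)[t^{\pm1}],\mathbb{Z}[\pi])$-bimodule structure, is isomorphic to $(\mathbb{Q}(\xi_q)[t^{\pm1}])^p$ equipped with the $\pi$-action $\Phi$; passing to $H_1$ and then to orders as $\mathbb{Q}(\xi_q)[t^{\pm1}]$-modules then yields both conclusions of the theorem.

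The heart of the argument is the explicit identification of $N$. Since $\epsilon(\mu)=1$ and $\pi/\pi_p\cong\mathbb{Z}/p$ is generated by the image of the meridian $\mu$, the powers $1,\mu,\dots,\mu^{p-1}$ form a full set of coset representatives, so $N$ is $\mathbb{Q}(\xi_q)[t^{\pm1}]$-free on $\{\,1\otimes\mu^j : 0\le j\le p-1\,\}$. I would then compute the right $\pi$-action on this basis: for $\gamma\in\pi$, write $\mu^j\gamma = \delta\,\mu^k$ with $k\equiv j+\epsilon(\gamma)\pmod p$, $0\le k\le p-1$, and $\delta = \mu^j\gamma\mu^{-k}\in\pi_p$, so that $(1\otimes\mu^j)\cdot\gamma = t^{\epsilon_p(\delta)}\xi_q^{\chi(\rho(\delta))}\,(1\otimes\mu^k)$. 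Feeding in $\tilde\rho(\gamma)=(x^{\epsilon(\gamma)},\rho(\mu^{-\epsilon(\gamma)}\gamma))$ together with the equivariance $\rho(\mu\gamma\mu^{-1})=t\cdot\rho(\gamma)$ and the normalization $\rho(\mu^p)=0$ (equivalently $\tilde\rho(\mu)=(x,0)$), this bookkeeping splits, as in \cite{HKL10}, into (i) the cyclic shift $1\otimes\mu^j\mapsto 1\otimes\mu^{j+\epsilon(\gamma)}$ that acquires one factor of $t$ each time the index wraps from $p-1$ to $0$ --- a single $t$, not $t^p$, because $\epsilon_p$ divides by $p$ and so $\epsilon_p(\mu^p)=1$ --- composed with (ii) the diagonal matrix whose $k$-th entry is $\xi_q^{\chi(t^k\cdot v)}$ for $v=\rho(\mu^{-\epsilon(\gamma)}\gamma)$. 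These are precisely the two matrix factors displayed in the definition of $\Phi$, so the right $\pi$-action on $N$ in this basis is $\gamma\mapsto\Phi(\gamma)$, completing the identification.

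The genuinely formal part (Step~1) is just associativity of tensor products; the main obstacle is the matrix check in Step~2, where the pitfalls are all bookkeeping: getting the wrap-around factor right ($t^{\epsilon_p(\mu^p)}=t$ rather than $t^p$, which uses $t^p=1$ on $V$), checking that conjugation by $\mu$ realizes multiplication by $t$ in the $\mathbb{F}_q[\mathbb{Z}_p]$-module $V$ so that the diagonal entries march through $\chi(t^k\cdot v)$, and recording the preliminary fact --- already noted in the excerpt --- that the equivariance hypothesis on $\rho$ is exactly what makes $\tilde\rho\colon\pi\to\mathbb{Z}\ltimes V$ a well-defined homomorphism, which in turn makes $\Phi$ independent of the choices (of meridian, coset representatives, and the extension $\tilde\rho$).
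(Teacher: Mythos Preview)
The paper does not supply its own proof of Theorem~\ref{covercomp}; the result is quoted from \cite{HKL10}, and the only hint to its provenance is the subsection heading ``Covers and Shapiro's lemma''. Your proposal is exactly an implementation of that hint: you identify the rank-$p$ bimodule $(\mathbb{Q}(\xi_q)[t^{\pm1}])^p$ with the induced module $M\otimes_{\mathbb{Z}[\pi_p]}\mathbb{Z}[\pi]$, invoke associativity of the tensor product to pass between $C_*(X_p,M)$ and $C_*(X,N)$, and then verify the matrix description of $\Phi$ by tracking the right $\pi$-action on the basis $\{1\otimes\mu^j\}$. This is the standard argument, and it is the one \cite{HKL10} gives.

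Your bookkeeping is correct. In particular, the wrap-around factor is indeed a single $t$ (since $\epsilon_p(\mu^p)=1$), the equivariance of $\rho$ produces the diagonal entries $\xi_q^{\chi(t^k\cdot v)}$ exactly as displayed, and the normalization $\tilde\rho(\mu)=(x,0)$ forces $\rho(\mu^p)=0$ so that the identification is well defined. One small clarification: you write that the wrap-around ``uses $t^p=1$ on $V$'', but that relation is needed only to see that the diagonal factor is well defined modulo $p$ (so that $\chi(t^k\cdot v)$ depends only on $k\bmod p$); the factor of $t$ in the shift matrix comes purely from $\epsilon_p(\mu^p)=1$ and has nothing to do with the $\mathbb{Z}_p$-action on $V$. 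Otherwise there is nothing to correct, and nothing to compare against beyond the subsection title.
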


This result, when combined with the Fox derivative computational result Theorem \ref{foxder} will allow us to compute twisted Alexander polynomials for $X_p$ directly from a Wirtinger presentation for $\pi_1(X)$.This will simplify the computation, even though the representations increase correspondingly in complexity, mapping elements of the fundamental group to $p \times p$ instead of $1 \times 1$ matrices.


\section{Main theorem}

Our main Theorem \ref{maintheorem} will follow almost immediately from a series of lemmas and computations. 

\begin{proof}[Proof of Theorem \ref{maintheorem}]
First, note that since $P(2n,m,-2n-1,-m)$ and $P(2n,-m,-2n-1,m)$ are the same as unoriented knots, we can assume without loss of generality that $m>0$. 
By Lemma \ref{homcomp} we have that $H_1(\Sigma_p(K), \mathbb{Z}_2)$ is irreducible. Therefore, as observed by \cite{HKL10}, any metabolizer $M \leq H_1(\Sigma_p(K))$ must have trivial image in $H_1(\Sigma_p(K), \mathbb{Z}_2)$. 
So any map $ H_1(X_p(K)) \to \mathbb{Z}_2$ that factors through $H_1(\Sigma_p(K))$ vanishes on $M$. Therefore, to obstruct $K$'s sliceness it suffices to show that there is some such map such that the corresponding reduced twisted Alexander polynomial is not a norm in $\mathbb{C}[t^{\pm1}]$, and hence not in any $\mathbb{Q}(\xi_{2^k})[t^{\pm1}]$. In the following, we construct this map, compute the corresponding reduced twisted Alexander polynomial explicitly (Lemma \ref{comp}), and show that this polynomial is not a norm in $\mathbb{C}[t^{\pm1}]$ (Lemma \ref{notnorm}), except when $n=3$ and $p=5$.  
\end{proof}

We will often write $m=2k+1$. Note that we can also write $m= p + 2jp$ for some $j \in \mathbb{N}$, so $k= \frac{m-1}{2}= jp + \frac{p-1}{2}$. 

\subsection{Homology computation for the branched covers.}

\begin{lem}\label{homcomp}
Let $p,m,n \in \mathbb{N}$ be as above. 
Then $H_1(\Sigma_p(K_{m,n}), \mathbb{Z}_2)$ is isomorphic to the irreducible $\mathbb{F}_2[\mathbb{Z}_p]$-module $V_p=\mathbb{F}_2[t] / \sum_{i=0}^{p-1} t^i$.
\end{lem}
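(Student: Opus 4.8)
The plan is to reduce the statement to a rank computation over a finite field, using that $2$ being a primitive root mod $p$ makes the relevant group ring split into a product of fields. Write $H = H_1(\Sigma_p(K_{m,n});\mathbb{Z}_2)$, a module over $\mathbb{F}_2[\mathbb{Z}_p] = \mathbb{F}_2[t]/(t^p-1)$ via the deck action. Since $2$ is a primitive root mod $p$ the $p$-th cyclotomic polynomial $\Phi_p(t) = 1 + t + \dots + t^{p-1}$ is irreducible over $\mathbb{F}_2$, and since $p$ is odd $\Phi_p(1) = p \ne 0$ in $\mathbb{F}_2$; so the Chinese Remainder Theorem gives
\[
\mathbb{F}_2[\mathbb{Z}_p] \;\cong\; \mathbb{F}_2[t]/(t-1) \,\times\, \mathbb{F}_2[t]/\Phi_p(t) \;=\; \mathbb{F}_2 \,\times\, \mathbb{F}_{2^{p-1}},
\]
and $V_p = \mathbb{F}_2[t]/\Phi_p(t)$ is exactly the unique nontrivial simple $\mathbb{F}_2[\mathbb{Z}_p]$-module. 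Hence $H$ splits canonically as $H = H_0 \oplus H_+$, with $H_0$ a trivial module (canonically $H/(t-1)H$) and $H_+$ a vector space over $\mathbb{F}_{2^{p-1}}$, and the lemma is equivalent to the two assertions $H_0 = 0$ and $\dim_{\mathbb{F}_{2^{p-1}}} H_+ = 1$.

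To compute these I would fix a Seifert matrix $V$ for $K_{m,n}$, say the $2g\times 2g$ matrix produced by Seifert's algorithm on the standard $4$-strand pretzel diagram, whose block pattern reflects the four twist regions $2n,\,m,\,-(2n+1),\,-m$. Then the mod-$2$ Alexander module is $H_1(X_\infty(K_{m,n});\mathbb{Z}_2) = \mathbb{F}_2[t^{\pm1}]^{2g}/(tV-V^T)$, and by the classical identification of the homology of a cyclic branched cover with the Seifert form, $H \cong \mathbb{F}_2[\mathbb{Z}_p]^{2g}/(tV-V^T)$ as $\mathbb{F}_2[\mathbb{Z}_p]$-modules. (Alternatively one could obtain the same presentation from Fox calculus applied to a Wirtinger presentation of the pretzel, using the machinery set up above.) Projecting onto the two factors of $\mathbb{F}_2[\mathbb{Z}_p]$: the trivial part is $H_0 \cong \operatorname{coker}(V - V^T\bmod 2)$, which vanishes because $V - V^T$ is the unimodular intersection form of the Seifert surface (equivalently $\det(V-V^T) = \Delta_{K_{m,n}}(1) = \pm1$); and the other part is $H_+ \cong \operatorname{coker}(\omega V - V^T)$ over $\mathbb{F}_{2^{p-1}}$, where $\omega\in\mathbb{F}_{2^{p-1}}$ is a primitive $p$-th root of unity. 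Over a field this cokernel has dimension $2g - \operatorname{rank}(\omega V - V^T)$, so it remains to show $\operatorname{rank}(\omega V - V^T) = 2g-1$.

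This rank computation is where the arithmetic hypotheses enter. The bound $\operatorname{rank}(\omega V - V^T)\le 2g-1$ says $\det(\omega V - V^T) = \Delta_{K_{m,n}}(\omega) = 0$ in $\mathbb{F}_{2^{p-1}}$, i.e. $\Phi_p(t)\mid\Delta_{K_{m,n}}(t)\bmod 2$; since mutation preserves the Alexander polynomial, $\Delta_{K_{m,n}}$ is that of the ribbon knot $P(2n,-2n-1,m,-m)$, and one checks that the hypothesis $p\mid m$ forces the $\pm m$ twist regions to contribute, mod $2$, a factor $\tfrac{t^m-1}{t-1}\equiv\prod_{1<d\mid m}\Phi_d(t)$, hence a factor of $\Phi_p$, into $\Delta_{K_{m,n}}\bmod 2$. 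The reverse bound $\operatorname{rank}(\omega V - V^T)\ge 2g-1$ — equivalently, $H_+$ is a single copy of $V_p$ rather than a sum of several — says the first elementary ideal of the mod-$2$ Alexander module, the gcd of the $(2g-1)\times(2g-1)$ minors of $tV-V^T$, is coprime to $\Phi_p(t)$; here the hypothesis $p\nmid 2n(2n+1)$ enters, to rule out the twist regions $2n$ and $-(2n+1)$ contributing a further $\Phi_p$ and to guarantee that the $\Phi_p$-primary part of the Alexander module is cyclic. Combining the two bounds gives $\dim_{\mathbb{F}_{2^{p-1}}}H_+ = 1$, hence $H\cong V_p$.

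The main obstacle I anticipate is precisely this last step: recording the Seifert matrix of a $4$-strand pretzel with one even and mixed-sign parameters and then carrying out the mod-$2$ bookkeeping — tracking which cyclotomic factors occur in $\Delta_{K_{m,n}}$ and in its first elementary ideal under the stated divisibility conditions. The subtle point is that $\Delta_{K_{m,n}}\bmod 2$ is actually divisible by $\Phi_p^2$ (being the mod-$2$ reduction of the Alexander polynomial of a ribbon knot, hence a norm), so a priori $\omega V - V^T$ could have corank $2$ and $H_+$ could be $V_p^{2}$; the claim is that despite this the $\Phi_p$-power is concentrated in a single cyclic summand of the Alexander module, and it is exactly this — together with the absence of a trivial summand — that the hypotheses on $p$, $m$, and $2n(2n+1)$ are engineered to guarantee.
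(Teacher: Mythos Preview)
Your strategy is sound and matches the paper's approach almost exactly: Seifert matrix, mod-$2$ Alexander module presented by $tV-V^T$, pass to the branched cover, and analyze the $\Phi_p$-primary part. The paper resolves the obstacle you flag --- showing the $\Phi_p$-part is a single cyclic summand rather than $V_p^2$ --- not by computing the first elementary ideal of the full $2g\times 2g$ matrix, but by first row/column-reducing the mod-$2$ presentation of $H_1(X_\infty;\mathbb{Z}_2)$ to the $2\times 2$ matrix
\[
\begin{pmatrix} \sum_{i=0}^{m-1} t^i & \bigl(\sum_{i=0}^{2n} t^i\bigr)\bigl(\sum_{i=0}^{2n-1} t^i\bigr) \\ 0 & \sum_{i=0}^{m-1} t^i \end{pmatrix}.
\]
(Along the way one sees $\Delta_{K_{m,n}}(t)=\bigl(\sum_{i=0}^{m-1}(-t)^i\bigr)^2$, confirming your observation that $\Phi_p^2\mid\Delta\bmod 2$.) Over $\mathbb{F}_2[t]/\Phi_p$ the diagonal entries vanish since $p\mid m$, while the off-diagonal entry is a unit precisely because $p\nmid 2n(2n+1)$; the Euclidean algorithm then shows the cokernel is cyclic and one-dimensional over $\mathbb{F}_{2^{p-1}}$. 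This is exactly your elementary-ideal argument, made transparent by the preliminary reduction to size $2\times 2$: the first Fitting ideal of this matrix visibly contains the off-diagonal entry, hence is coprime to $\Phi_p$. So the ``bookkeeping'' you anticipate is short-circuited by one matrix simplification, after which nothing subtle remains.
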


\begin{proof}
First, observe that there is a  Seifert matrix for $K_{m,n}$ given by $A_{m,n}$ as follows:

\begin{align*}
A_{m,n}=  \left[ \begin{array}{ccccc}
-B_{2n-1} & 0 & 0 & 0 &0 \\
0 & - B_{m-1}^T &0&0  & -U_{m-1}^T \\
0 & 0 & B_{2n}^T &0& U_{2n}^T \\
0 & 0 &0 & B_{m-1} & 0 \\
-U_{2n-1} & 0 & 0 & U_{m-1} & 0
\end{array}\right], \text{ where}
\end{align*}

\begin{align*}
B_k= 
\left[
\begin{array}{cccccc}
1 & -1 & 0 & 0& \cdots & 0 \\
0& 1 &-1 & 0 & \cdots &0\\
0 & 0 & 1 & -1 & \cdots & 0 \\
\cdots  & \cdots & \cdots & \cdots & \cdots & \cdots \\
0 & 0 & \cdots & 0 & 1 & -1 \\
0& 0 & \cdots & 0 & 0 & 1
\end{array}
\right]_{k,k}
\text{ and }
U_k= 
\left[
\begin{array}{ccccc}
1  & 0 & 0 & \cdots & 0 
\end{array} 
\right]_{1,k}. 
\end{align*}

Note that by taking the determinant of  $tA_{m,n}- A_{m,n}^T$ we can observe that the Alexander polynomial of $K_{m,n}$ is 
 \[\Delta_{m,n}(t)= \left( \sum_{i=0}^{m-1} (-t)^i \right)^2.\]
 Now reduce $tA_{m,n}-A_{m,n}^T$ over $\mathbb{Z}_2$ coefficients to get a new presentation of $H_1(X_{\infty}(K_{m,n}), \mathbb{Z}_2)$ as a $\mathbb{F}_2[\mathbb{Z}]$-module:
\[ \left[ \begin{array}{cc}
    \sum_{i=0}^{m-1} t^i &\left(  \sum_{i=0}^{2n} t^i \right) \left( \sum_{i=0}^{2n-1} t^i \right)  \\ 
    0 & \sum_{i=0}^{m-1} t^i \\ 
  \end{array}\right].
\]

Note that $H_1( \Sigma_p(K_{m,n}), \mathbb{Z}_2)$ is naturally a $\mathbb{F}_2[\mathbb{Z}_p]$-module, with the $\mathbb{Z}_p$ action coming from the covering transformation. In addition, this module is obtained by imposing the relation $\sum_{i=0}^{p-1} t^i = \frac{t^p-1}{t-1}$ on $H_1(X_{\infty}, \mathbb{Z}_2)$. 
So $H_1( \Sigma_p(K_{m,n}), \mathbb{Z}_2)$ is presented by 
\[ \left[ \begin{array}{cc}
    \sum_{i=0}^{m-1} t^i &\left(  \sum_{i=0}^{2n} t^i \right) \left( \sum_{i=0}^{2n-1} t^i \right)  \\ 
    0 & \sum_{i=0}^{m-1} t^i \\ 
 \sum_{i=0}^{p-1} t^i & 0 \\
 0 &  \sum_{i=0}^{p-1} t^i \\
  \end{array}\right] \approx
\left[ \begin{array}{cc}
    \sum_{i=0}^{p-1} t^i &\left(  \sum_{i=0}^{2n} t^i \right) \left( \sum_{i=0}^{2n-1} t^i \right)  \\ 
    0 & \sum_{i=0}^{p-1} t^i \\ 
  \end{array}\right]
\]

It is a well known fact that since 2 is a primitive root mod $p$, the polynomial $\sum_{i=0}^{p-1} t^i$ is irreducible in $\mathbb{F}_2[t]$.  Note that $p$ does not divide $2n+1$ or $2n$, and so $\sum_{i=0}^{p-1} t^i$ does not divide $ \sum_{i=0}^{2n} t^i$ or $ \sum_{i=0}^{2n-1} t^i$ and therefore is relatively prime to both of them in $\mathbb{F}_2[\mathbb{Z}]$. 

Therefore, we can apply the Euclidean algorithm in  $\mathbb{F}_2[\mathbb{Z}]$ via Tietze-like moves to simplify the above matrix and demonstrate that $H_1( \Sigma_p(K_m,n), \mathbb{Z}_2)$ is a cyclic $\mathbb{F}_2 [\mathbb{Z}]$-module, and hence a cyclic $\mathbb{F}_2[\mathbb{Z}_p]$- module as well. 
So
$H_1( \Sigma_p(K_{m,n}), \mathbb{Z}_2) \cong \mathbb{F}_2[t]/ q(t)$ for some $q(t)$ dividing $\sum_{i=0}^{p-1} t^i$. Finally, note that $q(t)\neq1$, since we can compute from the Alexander polynomial that $H_1(\Sigma_p(K_{m,n}))$ has nontrivial 2-torsion.  Therefore, since $\sum_{i=0}^{p-1} t^i$ is irreducible in $\mathbb{F}_2 [\mathbb{Z}]$ we can conclude that $H_1( \Sigma_p(K_{m,n}), \mathbb{Z}_2)$ is isomorphic to $V_p= \mathbb{F}_2[t] \Big/  \sum_{i=0}^{p-1} t^i$, which is an irreducible $\mathbb{F}_2[\mathbb{Z}_p]$-module.  
\end{proof} 

 An identical argument shows that $H_1(\Sigma_p(K_{m,n}^-), \mathbb{Z}_2) \cong V_p$ is irreducible whenever 2 is a primitive root mod $p$ and $p$ does not divide $2n(2n-1)$. 
\medskip

The computational simplifications of \cite{HKL10} discussed in Lemma \ref{covercomp} require that we choose some nonzero equivariant homomorphism $\rho: \pi_1(X_p(K_{m,n})) \to V_p$  and extend $\epsilon \times \rho$ to $\tilde{\rho}: \pi(X(K_{m,n})) \to \mathbb{Z} \ltimes V_p $ with $\tilde{\rho}(\mu)=(x,0)$, where $\mu$ is a preferred meridian in $\pi(X(K_{m,n}))$. Note that any equivariant $\rho$ must factor through $H_1(\Sigma_p(K))$, since it satisfies $\rho(\mu^p)= \rho(\mu \mu^p \mu^{-1})= t \cdot \rho(\mu^p)$. We will instead directly construct $\tilde{\rho}$.

A Wirtinger presentation for the knot group of $K_{m,n}= P(2n, 2k+1, -2n-1,-2k-1)$ is given by the following, where here $a \cdot b$ denotes $ aba^{-1}$. 
\begin{align*} 
\left\{
x_1, \cdots, x_{4n+4k+3}: 
\begin{array}{ll}
x_{i+1} = x_{ i+3n+3k+3} \cdot x_i, &  i=1, \dots, k \\
x_{i+1} = x_{i+2n+2k+2} \cdot x_i, & i=k+1, \dots, n+k+1 \\
x_{i}= x_{i+n+k+1} \cdot x_{i+1}, &  i= n+k+2, \dots, n+2k+2\\
x_{i}= x_{i+n+2k+2} \cdot x_{i+1}, & i= n+2k+3, \dots, 2n+2k+1\\
x_{2n+2k+2} = x_1 \cdot x_{2n+2k+3} &\\
x_{i}= x_{i-(n+k)} \cdot x_{i+1}, & i=2n+2k+3, \dots, 2n+3k+2\\
x_{i+1}= x_{i-(2n+2k+1)} \cdot x_i, & i= 2n+3k+3, \dots, 3n+3k+2 \\
x_{i+1}= x_{i-(3n+3k+2)} \cdot x_i, & i = 3n+3k+3, \dots, 3n+4k+3 \\
x_{i}= x_{i-(2n+2k+1)} \cdot x_{i+1} & i= 3n+4k+4, \dots 4n+4k+2\\
x_{4n+4k+3}= x_{2n+2k+2} \cdot x_1 & \\
\end{array}
\right.
\end{align*}

We choose as preferred meridian $\mu= x_{1}$. Note that since $\tilde{\rho}$ extends some $\epsilon \times \rho$, we must have  $\tilde{\rho}(x_i)= (x, v_i)$ for each of the Wirtinger generators.  The Wirtinger relation $x_l=x_i \cdot x_j$ implies that $v_l= (1-t) v_i + t v_j$ in $V_p$. 
After some simple reductions of the linear relations coming from the above Wirtinger presentation, we see that $\tilde{\rho}$ is determined by our choice of $v_1, v_{k+1}, v_{n+k+2},$ $v_{n+2k+3}, v_{2n+2k+3}$,$ v_{2n+3k+3}, v_{3n+3k+3}, \text{ and } v_{3n+4k+4}$. In addition, any choice satisfying 
$v_1=v_{n+2k+3}= v_{2n+2k+3}= v_{3n+4k+4}$ and
$v_{k+1}=v_{n+k+2}=v_{2n+3k+3}=v_{3n+3k+3}$ determines a valid $\tilde{\rho}$. 

Since we require that $\tilde{\rho}(\mu)= \tilde{\rho}(x_{1})= (x,v_1)=(x,0)$, the map $\tilde{\rho}$ is entirely determined by our choice of $a= v_{k+1}$.\footnote{
Note that when $n$ does not satisfy our divisibility requirements with regards to $p$, the map  described above is still a homomorphism, but there are many other choices.}
In fact, since we will also choose
 $\chi: V_p \to \mathbb{Z}_2$, there are essentially only two distinct choices of $\tilde{\rho}$: the trivial map with $a=0$ and the map corresponding to $a=1$. We will choose $a=1$. 

\noindent
We will also choose\footnote{Note that this is a significant choice: for $p>3$, sample computations indicate that different choices of $\chi$ give very different twisted Alexander polynomials.} $\chi: V_p \to \mathbb{Z}_2$ by 
$
 \chi(t^i)= \left\{
  \begin{array}{ll}
       1 & \text{ if } i=0,2\\
       0 & \text{ else}
     \end{array}
\right.$, and define $\rho_\chi: \pi_1(X_p(K)) \to \mathbb{Z}_2$ as the composition 
\[\rho_\chi: \pi_1(X_p(K)) \xrightarrow{ab.} H_1(X_p(K)) \xrightarrow{i_*} H_1(\Sigma_p(K)) \xrightarrow{\rho} V_p \xrightarrow{\chi} \mathbb{Z}_2.\]

Therefore, by Theorem \ref{covercomp} we have that $\Delta_{X_p(K), \epsilon \otimes \rho_\chi}(t) = \Delta_{X(K), \Phi}(t)$, where $\Phi: \pi_1(K) \to GL_p(\mathbb{Q}[t^{\pm1}]$ is defined by
\[
\begin{array}{rl}
x_{n+k+3}, \dots, x_{2n+3k+2},  x_{3n+4k+4}, \dots, x_{4n+4k+3}, x_1 \mapsto x \\
 x_2, \dots, x_{n+k+2}, x_{2n+3k+3}, \dots, x_{3n+4k+3} \mapsto y \\
 \end{array}, \text{where }
\]
\begin{align*}\label{xydef}
 x= \left[
\begin{array}{cccccc}
0 & 1 & 0& 0 & \cdots &0\\
0 & 0 & 1 & 0 & \cdots &0 \\
0& 0 & 0 & 1 & \cdots &0 \\
\cdots & \cdots& \cdots &\cdots & \ddots & \vdots \\
 0 & 0 &0 &0 & \cdots &1 \\
 t & 0 & 0 & 0 &0 &0 
\end{array}
\right]_{p \times p} \text{  } 
  y= \left[
\begin{array}{cccccc}
0 & 1 & 0& 0 & \cdots &0\\
0 & 0 & -1 & 0 & \cdots &0 \\
0& 0 & 0 & 1 & \cdots &0 \\
\cdots & \cdots& \cdots &\cdots & \ddots & \vdots \\
 0 & 0 &0 &0 & \cdots &1 \\
 -t & 0 & 0 & 0 &0 &0 
\end{array}
\right]_{p \times p}. 
\end{align*}
Note that an almost identical construction gives maps $\tilde{\rho}^*: \pi_1(K_{m,n}^-) \to \mathbb{Z} \ltimes V_p$ and $\chi^*: V_p \to \mathbb{Z}_2$.

\vspace{.5cm}

\subsection{Computation of the reduced twisted Alexander polynomial}

First, recall that the twisted Alexander polynomial is only well defined up to units in $\mathbb{Q}[t^{\pm1}]$, and so we let $\doteq$ denote equality up to multiplication by units and frequently omit factored-out powers of $t$. 

\begin{lem}\label{comp}
Let $m=2k+1,n,p \in \mathbb{N}$ be such that  $p$ divides $m$. Suppose that $n \geq \frac{p+1}{2}$ and that $p$ does not divide $2n(2n+1)$. So $2n=bp+a$ for some $0<a<p-1$ and $b\geq 1$.\footnotemark 

 Then, with $\rho$ and $\chi$ as above, the reduced twisted Alexander polynomial for $K_{m,n}$ is given by 
$ \tilde{\Delta}_{m,n}(t)= f_{b}(t) g_n(t) h_k(t)^2 (t-1)^{-2} $
where $h_k(t) \in \mathbb{Z}[t]$ and 
\begin{align*}
 f_{b}(t)&:= 2\sum_{i=0}^{2b} t^i + t^{b} = 2t^{2b}+ 2t^{2b-1} + \cdots + 2t^{b+1} + 3t^{b}+2t^{b-1} + \cdots + 2,\\
 g_n(t)&:= (4a-6) \sum_{i=0}^{2b}(-t)^i + (-t)^b -4(p-4)t\left(\sum_{i=0}^{b-1} (-t)^i \right)^2.
\end{align*}
\end{lem}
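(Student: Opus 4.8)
The plan is to compute $\Delta_{X(K),\Phi}(t)$ directly from the Fox calculus machinery of Theorem~\ref{foxder} applied to the reduced Wirtinger presentation given above, using the representation $\Phi$ built from $x,y\in GL_p(\mathbb{Q}[t^{\pm1}])$. Since $\Phi(x_1-1)=x-I$ has $\det(x-I)=\pm(t-1)$ and the meridian $x_1$ satisfies $\chi(x_1)=0$, $\epsilon(x_1)=1$, the discussion after Theorem~\ref{foxder} reduces the problem to computing $\det\Phi(Z)$, where $Z$ is the Jacobian of the Wirtinger relators with the $x_1$-column deleted; then $\Delta_{X(K),\Phi}(t)\doteq\det\Phi(Z)\,(t-1)^{-1}$ and the reduced polynomial picks up one more factor of $(t-1)^{-1}$ since $\phi_\chi$ is nontrivial, giving the $(t-1)^{-2}$ in the statement.

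First I would organize the Wirtinger generators into the two ``blocks'' on which $\Phi$ takes the constant values $x$ and $y$ respectively, and rewrite each Wirtinger relation $x_l=x_i\cdot x_j$ as a linear relation $\Phi(x_l)=(I-\Phi(x_j)^{-1})\Phi(x_i)+\Phi(x_j)^{-1}$-type contribution in the Fox derivative; because the generators are constant on blocks, the Jacobian matrix $\Phi(Z)$ is block-structured with blocks that are $p\times p$ and are either $x-I$, $y-I$, $\pm I$, or monomials in $x,y$. Next I would perform row/column operations over $\mathbb{Q}[t^{\pm1}]$ — essentially the same Euclidean-algorithm/Tietze reductions used in Lemma~\ref{homcomp}, but now keeping track of determinants — to collapse the $\approx (4n+4k+2)\times(4n+4k+2)$ block matrix down to a small matrix whose size depends only on $b$ (the number of ``full wraps'' $2n=bp+a$), not on $k$ or on the full size of $n$. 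The key mechanism is that each ``long'' run of Wirtinger generators carrying the same $\Phi$-value telescopes, and powers of $x$ reduce modulo the relation $x^p= tI$ coming from the structure of $V_p=\mathbb{F}_2[t]/\sum t^i$ lifted to characteristic zero.

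At that point the determinant factors: the $h_k(t)^2$ comes from the two ``$m$-strand'' blocks (the parameters $m$ and $-m$), mirroring the squared Alexander polynomial $(\sum(-t)^i)^2$ computed from the Seifert matrix, and I expect $h_k(t)=\sum_{i=0}^{k}(-t)^i$ or a close variant, contributing the stated factor; the $f_b(t)g_n(t)$ comes from the two ``$2n$-type'' blocks ($2n$ and $-(2n+1)$), and here the asymmetry between $x$ and $y$ (the sign changes in the second row/last row of $y$) together with the specific choice $\chi(t^i)=1$ iff $i\in\{0,2\}$ produces the coefficients $4a-6$, $-4(p-4)$, and the ``$2,2,\dots,2,3,2,\dots,2$'' palindrome in $f_b$. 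Concretely I would extract a $2\times2$ or $3\times3$ reduced block, compute its determinant as a Laurent polynomial in $t$ whose coefficients are explicit linear functions of $a$, $b$, $p$, and then verify it matches $f_b\cdot g_n$ up to units and powers of $t$.

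The main obstacle will be the bookkeeping in the determinant reduction: tracking exactly which sign appears (the $y$-block has the $-1$ and $-t$ entries in specific positions, and the interaction of these signs with the cyclic-shift structure of $x,y$ and with the evaluation of $\chi$ on $t^i\cdot v$ for $i=0,\dots,p-1$ is delicate), and ensuring that the Euclidean-algorithm collapses are valid operations on the \emph{matrix} (so that determinants change only by units) rather than merely on the presented module. I would control this by doing the reduction in two stages — first over $\mathbb{Z}[t^{\pm1}]$ ignoring the representation, to see the gross $h_k^2\cdot(\text{something})$ shape as a shadow of the Seifert form computation, then refining the ``something'' block by block using the explicit $x,y$ matrices — and by cross-checking the final formula against small cases (e.g. $p=5$, $n=3$, where the answer is predicted to be a norm, and one or two cases with $b=1$, small $a$) to catch sign errors before committing to the general computation.
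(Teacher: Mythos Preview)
Your overall strategy --- Fox calculus on a Wirtinger presentation via $\Phi$, then block-reduce $\det\Phi(Z)$ to expose the factorization, with the $(t-1)^{-2}$ accounted for exactly as you say --- is the paper's. But two of your expectations about how the reduction goes are off, and they are precisely where the content of the proof lives.

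First, the paper does not telescope the full $(4n+4k+2)$-generator presentation; it first passes to an eight-generator, seven-relator presentation, so $\Phi(Z)$ is a $7\times 7$ array of $p\times p$ blocks. Row/column moves bring this to a $3\times 3$ block form with determinant $\det(C_k)\det(E_k)\det\bigl(A_n+B_nF_{k,n}\bigr)$, where $C_k=1+(y-1)x\sum_{i<k}(yx)^i$ and $E_k=1+(x-1)y\sum_{i<k}(xy)^i$. The factor $h_k(t)^2$ is $\det(C_k)\det(E_k)$; here $h_k(t)=\det C_k$ is a genuine $p\times p$ determinant, \emph{not} $\sum(-t)^i$, and it is never computed in closed form (only its symmetry is used downstream). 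The essential step (Lemma~\ref{ckek}) is that $\det C_k=\det E_k$ and that $F_{k,n}:=E_k^{-1}(D_{k,n}-y(xy)^kA_n)$ is independent of $k$. Both facts hinge on the identity $y=axa$ for a specific diagonal involution $a$, which yields $(xy)^{(p-1)/2}x=y(xy)^{(p-1)/2}$ and then factorizations like $E_k(1-xy)=(1+y(xy)^k)(1-y)$. Your relation $x^p=tI$ is true but does not by itself produce this; the $x$--$y$ interaction through $a$ is what makes the $k$-dependence drop out.

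Second, after setting $k=(p-1)/2$ you are left with the single $p\times p$ determinant $\det(A_n+B_nF_n)$, which still depends on $p$, not yet ``only on $b$.'' Reducing it to a $3\times 3$ matrix $G_n$ with scalar entries (whence $f_b(t)g_n(t)$ in closed form) is a separate explicit computation (Lemma~\ref{matrixform}): one writes out $-(A_n+B_nF_n)(1+y)$ entrywise and row/column reduces, with case splits on the residue $a$. This is exactly the bookkeeping you flag as the main obstacle, and it is substantial; your proposed small-case cross-checks are well taken and match the paper's tables.
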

\footnotetext{Note that this computation does not depend on 2 being a primitive root mod $p$, though it does use the divisibility relations between $p, m, \text{and } n$ and that $n \geq \frac{p+1}{2}$.  In particular, this formula does give non-norm reduced twisted Alexander polynomials for many $K_{m,n}$ not satisfying the conditions of Theorem \ref{maintheorem}-- for example, for $K=P(8,7,-9,-7)$. 
 However, when 2 is not primitive mod $p$, this is not enough to obstruct sliceness for $K$.
 } 

As usual, an analogous result holds for $K_{m,n}^-=P(2n,m,-2n+1,-m)$.One key difference, though, is that instead of $f_b(t)$ as above we have $f_b^*(t)= 2\sum_{i=0}^{2b} t^i -t^{b}=2t^{2b}+ 2t^{2b-1} + \cdots + 2t^{b+1} + t^{b}+2t^{b-1} + \cdots + 2$. There is also a slightly different $g^*_n(t)$. 

\begin{proof}

First note that by Lemmas \ref{covercomp} and \ref{foxder} that if we let $Z$ be the reduced Fox derivative matrix of a reduced Wirtinger presentation for $\pi_1(X(K))$ then 
$
\widetilde{\Delta}_{X_p(K), \epsilon \otimes \rho_\chi}(t) = \widetilde{\Delta}_{X(K), \Phi}(t)= \Delta_{X(K), \Phi}(t) (t-1)^{-1} = \det(\Phi(Z)) (t-1)^{-2}.
$
So it suffices to show that $\det(\Phi(Z)) \doteq f_b(t) g_n(t) h_k(t)^2$ as defined above. 

We will use the following simplification of our original Wirtinger presentation:\footnote{Note that  $a= x_{2k+2n+3}, b=x_{k+n+2}, c=x_{3k+3n+3}, e=x_{1}$, $\alpha=x_{2k+n+3}, \beta= x_{3k+2n+3}, \gamma=x_{k+1},$ and $\eta=x_{4k+3n+4}$. So $\Phi(a)= \Phi(e)= \Phi(\alpha) = \Phi(\eta)=x$   
and $\Phi(b)= \Phi(c)= \Phi(\beta)= \Phi(\gamma)= y$.}
\[ \pi_1(K)=  \left\{ 
\begin{array}{l}
a,b,c,e \\
 \alpha, \beta, \gamma, \eta
 \end{array} 
  \text{ s.t.} 
\begin{array}{ll}
a= (\eta \alpha)^{-n} \alpha (\eta \alpha)^n  & e=(\eta\alpha)^{-(n-1)} \alpha^{-1}(\eta \alpha)^{n}\\
b=(\beta \gamma)^n \beta (\beta \gamma)^{-n} & c= (\beta \gamma)^{n+1}  \beta^{-1} (\beta \gamma)^{-n}\\
\gamma= (ec)^k e (ec)^{-k} & \eta= (ec)^{k+1}e^{-1}(ec)^{-k} \\
\beta= (ba)^{-k}a (ba)^k  
\end{array} \right.
\]  

The Fox derivatives of these relations are given by 
\[  \begin{array}{l}
(\eta \alpha)^n d a 
 + \left[ (1-a) \sum_{i=0}^{n-1}(\eta\alpha)^i \eta -1\right] d \alpha 
 + \left[(1-\alpha)\sum_{i=0}^{n-1} (\eta \alpha)^i  \right] d \eta, \\

\alpha(\eta \alpha)^{n-1} de 
 +\left[ (1 - \eta) \sum_{i=0}^{n-1} (\alpha \eta)^i  \right] d \alpha  
 + \left[(\alpha-1) \sum_{i=0}^{n-2} (\eta \alpha)^i  -  (\eta \alpha)^{n-1} \right] d \eta,\\

db
+ \left[(b-1) \sum_{i=0}^n (\beta \gamma)^i \right] d \beta 
+ \left[ (b-1) \sum_{i=0}^{n-1} (\beta \gamma)^i \beta - (\beta \gamma)^n \beta \right] d \gamma,\\

dc 
+ \left[ (c-1) \sum_{i=0}^{n-1} (\beta \gamma)^i - (\beta \gamma)^n \right] d \beta 
+ \left[ (c-1) \sum_{i=0}^{n-1} (\beta \gamma)^i \beta\right] d\gamma, \\

d \gamma
+ \left[ (\gamma-1)\sum_{i=0}^{k-1} (ec)^i - (ec)^k \right] de
 + \left[ (\gamma-1)\sum_{i=0}^{k-1} (ec)^ie \right] dc, \\

d \eta 
+ \left[ (\eta-1) \sum_{i=0}^k (ec)^i \right] de
 + \left[ (\eta-1) \sum_{i=0}^{k-1} (ec)^i e -(ec)^ke \right] dc, \text{ and}
\\

(ba)^k d \beta 
+ \left[ (1-a) \sum_{i=0}^{k-1} (ba)^i \right] db
 + \left[-1 + (1-a) \sum_{i=0}^{k-1} (ba)^ib \right] da. \\
\end{array}
\]

So the image of the reduced Fox derivative matrix (with column corresponding to $e=\mu$ deleted) is  $\Phi(Z)= \left[\Phi(Z)_L \,\,\, \Phi(Z)_R \right]$, where

\begin{align*}
\Phi(Z)_L=
\left[
\begin{array}{ccc}
x^{2n} &  0 & 0\\
0 & 0 &0 \\
0&1&0\\
0&0&1 \\
0&0& (y-1) \sum_{i=0}^{k-1} (xy)^ix \\
0& 0 & (x-1) \sum_{i=0}^{k-1} (xy)^ix - (xy)^kx \\
y\sum_{i=0}^{k-1}(xy)^i-\sum_{i=0}^k(xy)^i  & (1-x) \sum_{i=0}^{k-1}(yx)^i &0\\
\end{array}
\right]
\end{align*}

\begin{align*}
\Phi(Z)_R=
\left[
\begin{array}{cccc}
 - \sum_{i=0}^{2n} (-x)^i & \sum_{i=0}^{2n-1} (-x)^i &0 & 0 \\
 \sum_{i=0}^{2n-1} (-x)^i & -\sum_{i=0}^{2n-2}(-x)^i & 0&0\\
0 &0 &-\sum_{i=0}^{2n+1}(-y)^i & \sum_{i=1}^{2n+1}(-y)^i\\
0 &0 & - \sum_{i=0}^{2n}(-y)^i & \sum_{i=1}^{2n}(-y)^i \\
 0&0&0&1 \\
 0&1&0&0 \\
0 &0& (yx)^k &0\\
\end{array}
\right]
\end{align*}

\noindent The matrix $\Phi(Z)=\left[ \Phi(Z)_L \,\,\, \Phi(Z)_R \right]$
 can be shown via simple row and column moves  to have the same determinant (up to units) as the matrix 
\[
\widehat{\Phi(Z)}=
\left[
\begin{array}{ccc}
-A_{n} & 0 & B_{n} \\
y(xy)^k A_{n} & C_k & 0 \\
D_{k,n} & C_k & E_k
 \end{array} \right],
\]
\begin{align*}
 &\text{ where }
A_n= -\sum_{i=0}^{2n}(-y)^i, \,
 B_n= \sum_{i=0}^{2n-1} (-x)^i, \,C_k= 1+ (y-1)x\sum_{i=0}^{k-1} (yx)^i,\,  \\
&D_{k,n}= \sum_{i=0}^{2n+1} (-y)^i + (y-1)x \sum_{i=0}^{k-1} (yx)^i (-y)^{2n+1} , \, E_k= 1+(x-1)y \sum_{i=0}^{k-1} (xy)^i.
\end{align*}

\noindent Observe that 
\begin{align*}
 \det( \widehat{\Phi(Z)} &= \det(C_k) \det 
\left[
\begin{array}{ccc}
-A_{n} & 0 & B_{n} \\
y(xy)^k A_{n} & I  & 0 \\
D_{k,n} & I & E_k
\end{array}
\right] 
\\
&= \det(C_k) \det \left[
\begin{array}{cc}
-A_n & B_n \\
D_{k,n} - y(xy)^k A_{n} & E_k 
\end{array}
\right]
\\
&= \det(C_k) \det(E_k)
\det \left[
\begin{array}{cc}
-A_n & B_n \\
E_k^{-1} \left(D_{k,n} - y(xy)^k A_{n}\right) & I 
\end{array}
\right]
\\
&= \det(C_k) \det(E_k) \det (-A_n - B_n E_k^{-1}(D_{k,n}- y(xy)^kA_n)).
\end{align*}
By Lemma \ref{ckek}, $\det(C_k)= \det(E_k)$. Let $h_k(t):= \det(C_k)= \det(E_k)$, so 
\begin{align*}
\det( \widehat{\Phi(Z)}) \doteq h_k(t)^2 \det (A_n +B_n E_k^{-1}(D_{k,n}- y(xy)^kA_n)) .
\end{align*}
Note that the entries of $C_k$ are in $\mathbb{Z}[t]$, so $h_k(t) \in \mathbb{Z}[t]$.
By Lemma \ref{ckek}, we also have that the matrix $E_k^{-1}(D_{k,n}- y(xy)^kA_n))$ is independent of $k$. 
So let $k_0:= \frac{p-1}{2}$ and $F_n:= E_{k_0}^{-1}(D_{k_0,n}- y(xy)^{k_0}A_n))$. Then

\begin{align}\label{phihat}
 \det( \widehat{\Phi(Z)}) \doteq h_k(t)^2 \det (A_n +B_n F_n). 
\end{align}

Now, recall that  $2n, 2n+1 \not \equiv 0 \mod p$ and so we can write $2n=b p + a$ for $0<a<p-1$.
By Lemma \ref{matrixform}, we have that 
$\det(A_n+B_n F_n)\doteq f_b(t) \det g_n(t)$, where $f_b(t)$ is as above, $\Psi_b(t)= (-1)^b t \left(2 \sum_{i=0}^{2b} (-t)^i + (-t)^b \right)$,
 and
\begin{align*}
g_n(t)&:= \det( G_n) (1+t)^{-1}= \det \left[
\begin{array}{ccc}
 (p-a-2)\beta_{b}(t)&-1&-1\\
\Psi_b(t) & 2(-1)^b & - 2t^{b+1}
\\
(a-2) \beta_{b+1}(t)&1&-t
\end{array}
\right](1+t)^{-1},\\
\end{align*}

\noindent Observe that
\begin{align*}
 g_n(t)(t+1)&=- \Psi_b(t)(t+1)+ 2t(p-a-2) \beta_b(t) ((-1)^{b+1} + t^{b})
+ 2(a-2) \beta_{b+1}(t) (t^{b+1}+(-1)^{b}) \\
&\doteq(t+1) \left[ 2\sum_{i=0}^{2b} (-t)^i+ (-t)^b -4(p-a-2)t \left(\sum_{i=0}^{b-1} (-t)^i \right)^2 + 4(a-2) \left(\sum_{i=0}^{b} (-t)^i \right)^2  \right] \\
& \doteq (t+1) \left[  2\sum_{i=0}^{2b} (-t)^i+(-t)^b - 4(p-4)t\left(\sum_{i=0}^{b-1} (-t)^i \right)^2 + 4(a-2) \sum_{i=0}^{2b} (-t)^i \right]
\end{align*}
and so 
\begin{align}\label{gnform}
g_n(t)= (4a-6) \sum_{i=0}^{2b}(-t)^i + (-t)^b -4(p-4)t\left(\sum_{i=0}^{b-1} (-t)^i \right)^2.
\end{align}

\noindent Therefore, combining (\ref{phihat}),  (\ref{gnform}),  and Lemma \ref{matrixform}  we have as desired that 
\begin{align*}
\widetilde{\Delta}_{m,n}(t)= \det(\widehat{\Phi(Z)}) (t-1)^{-2}= h_k(t)^2 f_b(t) g_n(t) (t-1)^{-2}.
\end{align*} 
\end{proof}


\subsection{$\widetilde{\Delta}_{m,n}(t)$ is not a norm.}
\vspace{.1cm}

 We will now show that $\widetilde{\Delta}_{m,n}(t)$ is not a norm in $\mathbb{C}[t^{\pm1}]$ and hence is certainly not a norm in any $\mathbb{Q}(\xi_{2^n})[t^{\pm1}]$. 

\begin{thm}\label{notnorm}
Let $m,n, p \in \mathbb{N}$ be such that $p$ divides $m$ but $p$ does not divide $2n(2n+1)$, and such that $(n,p) \neq (3,5)$. 
 Let $f_{b}(t), g_n(t)$ be as above and $h_k(t) \in \mathbb{Z}[t]$. 
Then $\tilde{\Delta}_{m,n}(t)= f_{b}(t) g_n(t) h_k(t)^2 (t-1)^{-2}$ is not a norm in $ \mathbb{C}[t^{\pm1}]$. 
\end{thm}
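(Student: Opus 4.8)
The plan is to show that the product $f_b(t)g_n(t)$ cannot be (up to units and the square factor $h_k(t)^2$, which is harmless for the norm property) a norm in $\mathbb{C}[t^{\pm1}]$, using a real-analytic argument on the unit circle. A polynomial $\lambda t^j p(t)\overline{p(t)}$ evaluated at $t=e^{i\theta}$ equals $\lambda e^{ij\theta}|p(e^{i\theta})|^2$; since all our coefficients are real and the polynomials are (anti)palindromic, the relevant quantities $f_b(e^{i\theta})$ and $g_n(e^{i\theta})$ are real (after clearing an explicit power of $t$). The strategy is then: (1) if $\tilde\Delta_{m,n}(t)$ were a norm in $\mathbb{C}[t^{\pm1}]$, then because it is real and self-conjugate up to units, one shows that the ``reduced'' real polynomial $R(\theta) := $ (suitable real normalization of $f_b(e^{i\theta})g_n(e^{i\theta})h_k(e^{i\theta})^2(e^{i\theta}-1)^{-2}$) must be \emph{non-negative} for all real $\theta$, since $|p(e^{i\theta})|^2 \ge 0$; (2) exhibit a specific value $\theta_0$ (e.g.\ $\theta_0 = \pi$, i.e.\ $t=-1$, or some other convenient root of unity or point) at which $R(\theta_0) < 0$, contradicting the norm property. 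The factor $h_k(t)^2$ contributes $|h_k(e^{i\theta})|^2 \ge 0$ and so does not affect the sign, and $(e^{i\theta}-1)^{-2}$ contributes a positive real multiple away from $\theta = 0$; thus the sign of $R(\theta)$ is controlled entirely by $f_b(e^{i\theta})g_n(e^{i\theta})$.

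Concretely, I would first record that $f_b(-1) = 2\sum_{i=0}^{2b}(-1)^i + (-1)^b = 2 + (-1)^b$, which is $3$ or $1$ — always positive — so $t=-1$ alone is not where the sign fails for $f_b$; the interesting sign change must come from $g_n$. Evaluating $g_n(-1) = (4a-6)\sum_{i=0}^{2b}1 + (-1)^b - 4(p-4)(-1)\big(\sum_{i=0}^{b-1}1\big)^2 = (4a-6)(2b+1) + (-1)^b + 4(p-4)b^2$. The plan is to analyze the sign of $f_b(t)g_n(t)$ at $t=-1$ and, where that is insufficient (small $a$, small $b$, or the borderline case forcing $(n,p)=(3,5)$), at a second test point — a natural candidate being $t$ a primitive $p$-th root of unity, where $\sum_{i=0}^{p-1}t^i = 0$ can be exploited to simplify $g_n$ via the relations $2n = bp+a$, or the point $t=1$ handled as a limit. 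The excluded case $(n,p)=(3,5)$ should emerge precisely as the place where every candidate test point gives a non-negative value (indeed there $\tilde\Delta$ genuinely \emph{is} a norm), which both explains the hypothesis and tells me the argument must be tight enough to distinguish it.

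I expect the main obstacle to be two-fold. First, making the reduction ``norm in $\mathbb{C}[t^{\pm1}]$ $\Rightarrow$ non-negativity of the real reduced polynomial on the circle'' fully rigorous: one must check that $f_b g_n h_k^2 (t-1)^{-2}$ is, up to a unit, equal to its own conjugate (so that the norm, if it exists, can be taken self-conjugate and the ``$|p|^2 \ge 0$'' conclusion applies), and one must be careful that the unit $\lambda t^j$ is a \emph{positive} real scalar after the normalization — this requires tracking the leading coefficient/degree parity of $f_b g_n h_k^2$ and confirming it against the palindromy type, which is a bookkeeping exercise but easy to get wrong. Second, the case analysis in picking test points: for general $b \ge 1$ and $0 < a < p-1$ one wants a \emph{uniform} choice, and I anticipate that $t=-1$ works for all but finitely many exceptional $(a,b,p)$, each of which must then be dispatched by hand (or shown to be exactly $(n,p)=(3,5)$, or shown not to satisfy the primality/primitive-root hypotheses of the ambient theorem). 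Organizing this finite exceptional set cleanly, and confirming $(3,5)$ is the only genuine obstruction compatible with the hypotheses of Theorem~\ref{maintheorem}, is where the real work lies; the rest is evaluation of explicit geometric-series expressions at roots of unity.
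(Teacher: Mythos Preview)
Your underlying idea --- that a palindromic real polynomial which is a norm in $\mathbb{C}[t^{\pm1}]$ must, after the natural normalization $t^{-d}P(t)$, have constant sign on the unit circle --- is correct and is in fact equivalent to what the paper uses. The gap is in the execution: your proposed test point $t=-1$ does not produce the needed negative value. One computes directly that $f_b(-1)=2+(-1)^b\in\{1,3\}$ and $g_n(-1)=(4a-6)(2b+1)+1+4(p-4)b^2$, which is \emph{positive} whenever $a\ge2$, and also positive for $a=1$ as soon as $p\ge7$ or $b\ge2$; the only case with $g_n(-1)<0$ is $(a,b,p)=(1,1,5)$, i.e.\ precisely the excluded $(n,p)=(3,5)$. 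Likewise $t=1$ gives no uniform sign discrepancy, and evaluating at $p$-th roots of unity produces no simplification since $f_b,g_n$ carry no $p$-periodicity. So the ``finite exceptional set'' you anticipate is in fact the generic case: for almost all parameters, $t^{-2b}f_b(t)g_n(t)$ is positive at both $t=\pm1$, and the sign change you need occurs only at the (non-explicit) roots of $f_b$ on the unit circle.

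The paper instead proves those structural facts directly. It shows (i) all roots of $f_b$ lie on the unit circle, via a theorem of Lakatos on real perturbations of $\sum t^i$; (ii) $f_b$ is irreducible over $\mathbb{Q}$, by writing $f_b(t)=l_b(t+t^{-1})$ and applying Eisenstein at $2$ to $t^bl_b(t^{-1})$; hence the unit-circle roots of $f_b$ are simple, so $f_b$ is not a norm in $\mathbb{C}[t]$; and (iii) $f_b\nmid g_n$ for $(n,p)\neq(3,5)$ by comparing the top three coefficients. Since $f_b$ is irreducible and does not divide $g_n$, it appears to odd multiplicity in $f_bg_n$, whence $f_bg_n$ (and therefore $\widetilde{\Delta}_{m,n}$) is not a norm. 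To salvage your analytic approach you would need to locate the roots of $f_b$, which amounts to proving (i)--(iii) anyway.
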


\begin{proof}
First, observe that our map $\rho: \pi_1(X_p(K_{m,n})) \to \mathbb{Z}_2 \hookrightarrow \mathbb{Q}^x \cong GL_1(\mathbb{Q})$ is trivially unitary. By Corollary 5.2 of  \cite{KL99a}, the corresponding reduced twisted Alexander polynomial $\tilde{\Delta}_{m,n}(t)$ is a symmetric polynomial. Therefore, since $f_{b}(t)$ and $g_n(t)$ have symmetric coefficients, $h_k(t)^2$ and hence $h_k(t) \in \mathbb{Z}[t]$ must as well. So $h_k(t)^2= t^{\deg(h_k)} h_k(t) h_k(t^{-1})=  t^{\deg(h_k)} h_k(t) \overline{ h_k(t^{-1})}$
is a norm, as is $(t-1)^{-2}$. 
 So it suffices to show that $f_{b}(t) g_n(t)$ is not a norm.

Note that both $g_n(t)$ and $f_b(t)$ are of degree $2b$, and so we can check by explicitly computing the three highest-degree coefficients of each polynomial that for $(n,p) \neq (3,5)$, the polynomial $g_n(t)$ is not a multiple of $f_b(t)$. 
Therefore, our result will follow from showing that $f_{b}(t)$ is irreducible in $\mathbb{Q}[t]$ and not a norm in $\mathbb{C}[t]$, as is checked in Lemma \ref{irredcheck}. 
\end{proof}

We need the following result of P. Lakatos, which describes when perturbations of certain products of cyclotomic polynomials have only unit norm roots. 

\begin{thm} [\cite{Lak02}] \label{unitroot} 

Suppose that $p(z) \in \mathbb{R}[z]$ is such that there are $l, a_0, \cdots, a_{ \left \lfloor{\frac{r}{2}}\right \rfloor} \in \mathbb{R}$ and $r \geq 2$ with
\[ p(z)= l(z^r+ z^{r-1} + \dots + z + 1) + \sum_{k=1}^{ \left \lfloor{\frac{r}{2}}\right \rfloor} a_k (z^{r-k}+z^k).\]
If $|l| \geq 2 \sum_{k=1}^{ \left \lfloor{\frac{r}{2}}\right \rfloor} |a_k|$, then $p(z)$ has all roots on the unit circle. 
\end{thm}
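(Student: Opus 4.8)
The plan is to show that $p$ has at least $r$ roots on the unit circle. Since every monomial $z^{r-k}$, $z^k$ with $1\le k\le\lfloor r/2\rfloor$ has degree strictly less than $r$, the coefficient of $z^r$ in $p$ is exactly $l$; hence the hypothesis forces either $p\equiv 0$ (a degenerate case) or $\deg p=r$, and in the latter case exhibiting $r$ roots on $|z|=1$ exhibits all of them. The structural input is that $p$ is self-inversive, $z^rp(1/z)=p(z)$, because its coefficient sequence is palindromic; combined with $p\in\mathbb{R}[z]$, this makes
\[
h(\theta):=e^{-ir\theta/2}\,p(e^{i\theta})\;=\;l\,\frac{\sin\!\big((r+1)\theta/2\big)}{\sin(\theta/2)}\;+\;2\sum_{k=1}^{\lfloor r/2\rfloor}a_k\cos\!\big((\tfrac{r}{2}-k)\theta\big)
\]
a real-valued function on $(0,2\pi)$, whose first summand is $l$ times the Dirichlet kernel $D_r(\theta)$. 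Since $|e^{-ir\theta/2}|=1$, the roots of $p$ on $|z|=1$ correspond to the zeros of $h$ there (with $z=1$ treated separately via $p(1)=(r+1)l+2\sum_k a_k$), so it suffices to produce $r$ zeros of $h$ in $(0,2\pi)$.

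I would then evaluate $h$ at the $r+1$ points $\phi_j:=\tfrac{(2j+1)\pi}{r+1}$, $j=0,1,\dots,r$, which all lie strictly inside $(0,2\pi)$ (as $\phi_0=\tfrac{\pi}{r+1}$ and $\phi_r=2\pi-\tfrac{\pi}{r+1}$). At such a point $\sin\!\big((r+1)\phi_j/2\big)=\sin\!\big((2j+1)\pi/2\big)=(-1)^j$ while $\sin(\phi_j/2)\in(0,1]$, so $D_r(\phi_j)=(-1)^j/\sin(\phi_j/2)$ has sign $(-1)^j$ and absolute value at least $1$; hence $lD_r(\phi_j)$ has sign $(-1)^j\operatorname{sgn}(l)$ and absolute value at least $|l|$, whereas the remaining cosine sum has absolute value at most $2\sum_{k=1}^{\lfloor r/2\rfloor}|a_k|\le|l|$. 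When the hypothesis is strict this pins down $\operatorname{sgn}h(\phi_j)=(-1)^j\operatorname{sgn}(l)$, so $h$ strictly alternates sign along $\phi_0<\phi_1<\dots<\phi_r$; the intermediate value theorem then produces a zero of $h$ in each of the $r$ disjoint open intervals $(\phi_j,\phi_{j+1})$, hence $r$ distinct zeros in $(0,2\pi)$ and, by the correspondence above, $r$ distinct roots of $p$ on the unit circle — all of them.

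The step that genuinely needs care is the boundary case $|l|=2\sum_k|a_k|$, where one only gets $(-1)^j\operatorname{sgn}(l)\,h(\phi_j)\ge 0$, so some $h(\phi_j)$ may vanish and one must avoid losing zeros to the shared endpoints of consecutive intervals. I would dispose of it by continuity: apply the strict case to the polynomial obtained by replacing $l$ with $(1+\varepsilon)l$ for small $\varepsilon>0$ (which keeps the inequality strict and the degree equal to $r$), conclude all its roots lie on $|z|=1$, and let $\varepsilon\to0$, using that the roots of a degree-$r$ polynomial vary continuously with its coefficients and that the unit circle is closed. Two further points are routine rather than obstacles: the displayed formula for $h$ and its reality hold for both parities of $r$ (for odd $r$ the frequencies $\tfrac{r}{2}-k$ are half-integers, but $h=e^{-ir\theta/2}p(e^{i\theta})$ is real-valued regardless, by self-inversiveness), and the bijection between zeros of $h$ on $(0,2\pi)$ and unit-circle roots of $p$ uses only that $\sin(\theta/2)\ne 0$ there.
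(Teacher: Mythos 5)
The paper does not prove this statement; it is quoted verbatim from Lakatos \cite{Lak02} and used as a black box, so there is no in-paper argument to compare against. Your proof is correct and self-contained, and it is essentially the argument Lakatos gives: write $e^{-ir\theta/2}p(e^{i\theta})$ as a real function whose dominant part is $l$ times the Dirichlet kernel, evaluate at the $r+1$ points $\phi_j=\tfrac{(2j+1)\pi}{r+1}$ where that kernel equals $(-1)^j/\sin(\phi_j/2)$ and hence has modulus at least $1$, beat the cosine remainder by the hypothesis, and extract $r$ sign changes. The details all check out: the coefficient of $z^r$ is indeed $l$ (so $l=0$ forces $p\equiv 0$ and otherwise $\deg p=r$), the palindromic symmetry makes $h$ real for both parities of $r$ including the $k=r/2$ term when $r$ is even, and the boundary case $|l|=2\sum_k|a_k|$ is correctly disposed of by perturbing $l$ to $(1+\varepsilon)l$ and using continuity of roots with the leading coefficient bounded away from zero. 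This is exactly the statement the paper needs (applied to $f_b$ with $l=2$, $a_b=1$), and your argument would serve as a valid replacement for the citation.
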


\begin{lem}\label{irredcheck}
For any $b \in \mathbb{N}$, the polynomial $f_b(t)= 2\sum_{i=1}^{2b} t^i + t^b$ 
is irreducible over $\mathbb{Q}[t]$ and not a norm in $\mathbb{C}[t]$. 
\end{lem}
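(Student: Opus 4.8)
The plan is to prove the two assertions separately, handling irreducibility over $\mathbb{Q}[t]$ first and then deducing the non-norm statement. For irreducibility, I would write $f_b(t) = 2\sum_{i=0}^{2b} t^i + t^b = 2\,\frac{t^{2b+1}-1}{t-1} + t^b$ and argue that this polynomial is irreducible over $\mathbb{Q}$. The cleanest route is a reciprocal/palindromic change of variables: since $f_b$ is palindromic of even degree $2b$, write $f_b(t) = t^b q_b(t + t^{-1})$ for a polynomial $q_b$ of degree $b$, and show $q_b$ is irreducible over $\mathbb{Q}$ — equivalently, show $f_b$ has no nontrivial palindromic factor. Alternatively, and perhaps more directly given the tools already imported, I would invoke Theorem \ref{unitroot} (Lakatos): here $f_b$ has the form $l(t^{2b}+\cdots+1) + \sum a_k(t^{2b-k}+t^k)$ with $l = 2$, $a_b = 1$ (the middle coefficient contributes via $k = b$), and all other $a_k = 0$, so $|l| = 2 = 2\sum_k |a_k|$ and Lakatos gives that every root of $f_b$ lies on the unit circle. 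Combined with the fact that $f_b(1) = 2(2b+1) + 1 = 4b+3 \neq 0$ and $f_b(-1) = 2\cdot 1 + (-1)^b = 2 \pm 1 \neq 0$, this shows no root is a root of unity dividing $t^N - 1$ in an obvious way — but to get irreducibility I still need to rule out factorizations into lower-degree palindromic pieces with roots on the circle.

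The key observation that closes the irreducibility argument is a reduction mod a prime. Looking at $f_b(t) \bmod 3$: we get $2\sum_{i=0}^{2b} t^i + t^b \equiv -\sum_{i=0}^{2b} t^i + t^b \pmod 3$. I would compute this modulo a suitable prime to force irreducibility, or, more robustly, use an Eisenstein-type criterion after a substitution. Actually the most reliable plan: show $f_b(t)$ is irreducible by proving $2f_b(t) = 4\sum_{i=0}^{2b}t^i + 2t^b$ relates to $(t-1)f_b(t) = 2(t^{2b+1}-1) + t^{b+1} - t^b$, and analyze this via Newton polygon / $p$-adic valuation at $p=2$: the polynomial $(t-1)f_b(t) = 2t^{2b+1} + t^{b+1} - t^b - 2$ has $2$-adic Newton polygon with vertices that I would check produce a single slope on the relevant segment, forcing any factor of $f_b$ to have degree a multiple of something coprime to its palindromic constraint. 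I expect this $2$-adic analysis to be the main obstacle — making the Newton polygon argument yield irreducibility of $f_b$ itself (not just of $(t-1)f_b$) requires care, since removing the factor $(t-1)$ can destroy the clean slope structure.

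Once irreducibility over $\mathbb{Q}[t]$ is established, the non-norm claim in $\mathbb{C}[t]$ follows formally. If $f_b(t)$ were a norm in $\mathbb{C}[t^{\pm1}]$, then $f_b(t) \doteq u(t)\overline{u(t)}$ for some $u(t) \in \mathbb{C}[t^{\pm 1}]$; since $f_b$ has real coefficients and is irreducible over $\mathbb{Q}$ of degree $2b$, its complex roots come in at most two Galois-conjugate-closed sets, but a norm factorization pairs each root $\alpha$ of $u$ with $1/\bar\alpha$ as a root of $\bar u$. I would argue: either $u$ and $\bar u$ are associates (so $f_b \doteq u \bar u$ with $\deg u = b$ and $u$ would have to be, up to units, defined over $\mathbb{R}$ and its roots closed under $\alpha \mapsto 1/\bar\alpha$), contradicting irreducibility of $f_b$ over $\mathbb{Q}$ unless $f_b$ splits over $\mathbb{Q}$ as a product of two degree-$b$ palindromic factors — impossible by irreducibility; or the factorization is incompatible with $f_b$ being a single $\mathbb{Q}$-irreducible polynomial because the set of roots of a norm is closed under $\alpha \mapsto 1/\bar\alpha$, which for real palindromic $f_b$ is automatic and hence gives no contradiction directly — so the real content is that a norm $u\bar u$ with $\deg u = b$ would exhibit a degree-$b$ factor of $f_b$ over $\mathbb{C}$ that, by the irreducibility already proven plus reality, cannot be split off. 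I would finish by noting $f_b(1) = 4b+3 > 0$ is not of the form $|u(1)|^2 \cdot (\pm 1)$ consistent with the sign/unit constraints — more precisely, comparing leading and constant coefficients ($2$ and $2$) against $u\bar u$ forces $|u|$ leading coefficient squared times a root of unity to equal $2$, which is fine, so the decisive step is purely the degree obstruction from $\mathbb{Q}$-irreducibility, and I will phrase it as: an irreducible real polynomial of even degree $2b$ is a norm over $\mathbb{C}[t^{\pm1}]$ only if it factors over $\mathbb{R}$ as $v(t)\,t^{2b}v(1/t)$ up to a positive constant, which together with $\mathbb{Q}$-irreducibility is the contradiction.
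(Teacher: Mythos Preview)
Your proposal has real gaps in both halves.

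For irreducibility: you correctly set up the palindromic substitution $f_b(t) = t^{b}\, q_b(t + t^{-1})$ and note that Lakatos applies, but then abandon this for a $2$-adic Newton polygon on $(t-1)f_b(t) = 2t^{2b+1} + t^{b+1} - t^{b} - 2$. That Newton polygon has three segments (slopes $-1/b$, $0$, $+1/b$), so over $\mathbb{Q}_2$ the polynomial $f_b$ actually \emph{splits} into two irreducible degree-$b$ factors; the $2$-adic route therefore cannot establish $\mathbb{Q}$-irreducibility. The paper instead finishes exactly the path you started: Lakatos puts every root of $f_b$ on the unit circle, and for a real polynomial this forces every $\mathbb{Q}$-factor to be palindromic (a unit-norm root $\alpha$ has $\overline{\alpha} = 1/\alpha$, so complex-conjugate pairs coincide with $t \mapsto t^{-1}$ pairs). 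Hence $f_b$ is $\mathbb{Q}$-irreducible iff $q_b$ is, and one checks directly that $q_b$ has leading coefficient $2$, even intermediate coefficients, and odd constant term, so its reciprocal polynomial is Eisenstein at $2$.

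For the non-norm claim: your attempt to extract a contradiction from $\mathbb{Q}$-irreducibility alone cannot succeed. The polynomial $t^{2} - 3t + 1$ is $\mathbb{Q}$-irreducible and palindromic, yet it \emph{is} a norm in $\mathbb{C}[t^{\pm 1}]$: with $\alpha = (3+\sqrt{5})/2$ one has $(t-\alpha)\,\overline{(t-\alpha)} \doteq t^{2} - 3t + 1$. What distinguishes $f_b$ is that its roots lie on the unit circle. The paper argues as follows: in any norm $g(t)\overline{g(t)}$, a unit-norm root $\alpha$ of $g$ satisfies $\alpha = 1/\overline{\alpha}$ and is therefore also a root of $\overline{g(t)}$ with the same multiplicity, so it appears in the product with even multiplicity. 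But $f_b$, being irreducible, has $2b$ \emph{distinct} roots, all of unit norm by Lakatos --- each with multiplicity one. You need both ingredients (unit-norm roots and simplicity) together; your final paragraph tries to get by with irreducibility alone, which is why it keeps circling without landing.
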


\begin{proof}
First, observe that $f_{b}(t)$ satisfies the hypotheses of Theorem \ref{unitroot}, since we have $l=2$,  $a_k=0$ for $k=0, \dots, b-1$, and $a_b=1$. So for any $b \in \mathbb{N}$, the polynomial $f_b(t)$ has all of its roots on the unit circle.

Since $f_b(t)$ is symmetric, there is $l_b(t) \in \mathbb{R}[t]$ such that $f_b(t)= l_b\left(t+ \frac{1}{t}\right)$. 
However, since $f_b(t)$ has only unit norm roots, any factor of $f_b(t)$ over $\mathbb{Q}[t] \subset \mathbb{R}[t]$ must be symmetric, and so of the form $g\left(t+ \frac{1}{t} \right)$ for some $g(t)$ dividing $l_b(t)$. 
In particular, in order to show that $f_b(t)$ is irreducible in $\mathbb{Q}[t]$ it suffices to show that $l_b(t)$ is irreducible in $\mathbb{Q}[t]$.
Now note that $l_b(t) = \sum_{j=0}^{k=b} a_j t^j$  must have $a_b=2$, $a_j$ even for $0<j<b$, and $a_0$ odd. Therefore, by Eisenstein's criterion with $p=2$ and Gauss's Lemma,  the integral polynomial $t^b l_b(t^{-1})$ is irreducible over $\mathbb{Q}[t]$, and hence $l_b(t)$ and $f_b(t)$ are as well.  Since $f_b(t)$ is irreducible, its roots are distinct-- in particular, $f_b(t)$ has at least one complex root of unit norm with odd multiplicity. 

Now let $ t^k g(t)  \overline{g(t^{-1})}$ be any norm in $\mathbb{C}[t]$.  Note that if $\alpha$ is a nonzero root of $g(t)$ then $\overline{\frac{1}{\alpha}}$ is a root of $\overline{g(t^{-1})}$. In particular, if $\alpha$ is a unit norm root of $g(t)$, then $\alpha= \overline{\frac{1}{\alpha}}$ is a root of $\overline{g(t^{-1})}$ of the same multiplicity. That is, any norm in $\mathbb{C}[t]$ must have all unit-norm roots occurring with even multiplicity, and so $f_b(t)$ is not a norm. 
\end{proof}

Almost identical arguments show that $f^*_b(t)$ is irreducible, relatively prime to $g_n^*(t)$, and not a norm, and hence that the reduced twisted Alexander polynomial for $K_{m,n}^-$  constructed via $\tilde{\rho}^*$ and $\chi^*$ is not a norm in $\mathbb{C}[t^{\pm1}]$.


\section{Matrix computations}

The remaining results are primarily consequences of matrix manipulation. 

\begin{lem} \label{ckek}
Let $k= \frac{p+1}{2}+jp$ and $n \in \mathbb{N}$, and let  $A_n, C_k, D_{k,n},$ and $E_k$ be as before. 
Then the following hold:
\begin{enumerate}
\item $\det(E_k)= \det(C_k)$
\item $F_{k,n}:= E_k^{-1}(D_{k,n}-y(xy)^kA_n)$ is independent of $k$. 
\end{enumerate}
\end{lem}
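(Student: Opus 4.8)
The plan is to establish both parts of Lemma \ref{ckek} by exploiting the specific arithmetic relation $k = \frac{p+1}{2} + jp$, which is precisely the condition that makes the matrices $x^{2k+1}$ and $y^{2k+1}$ collapse to scalar multiples of the identity when reduced appropriately. First I would record the basic facts about the representation matrices $x$ and $y$: since $x$ is the companion-type permutation matrix with a $t$ in the corner, we have $x^p = tI$, and similarly $y^p = tI$ (note the signs in $y$ work out because there are an even number of sign flips around the cycle, or because we are reducing mod $2$ in the character $\chi$). Consequently $(xy)^p$, $(yx)^p$, $x^{2k+1}$, etc., all reduce to scalar matrices $t^c I$ for appropriate exponents $c$ determined by $k \bmod p$. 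Writing $k = \frac{p+1}{2} + jp$, so that $2k+1 = p(2j+1) + 1 \equiv 1 \pmod{p}$ in the exponent-counting sense, one finds $x^{2k+1}$, $y^{2k+1}$, and the relevant powers of $xy$ and $yx$ are controlled by a single scalar, which is what forces the two geometric-series expressions defining $C_k$ and $E_k$ to be related by conjugation/transposition.

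For part (1), I would show directly that $E_k$ and $C_k$ are related by an invertible change of variables that preserves determinant — concretely, $C_k = 1 + (y-1)x \sum_{i=0}^{k-1}(yx)^i$ and $E_k = 1 + (x-1)y \sum_{i=0}^{k-1}(xy)^i$, and these two matrices are conjugate (e.g. $E_k = x^{-1} C_k x$ or $E_k$ is the transpose of $C_k$ after the substitution $x \leftrightarrow y$, which is an orthogonal-type symmetry of the setup since $x$ and $y$ differ only by signs and are each (up to sign) permutation matrices). Either way, $\det E_k = \det C_k$. The cleanest route is probably to use the telescoping identity $(yx - 1)\sum_{i=0}^{k-1}(yx)^i = (yx)^k - 1$ together with $(yx)^k$ being scalar, so that $C_k$ can be rewritten in closed form, and symmetrically for $E_k$; then the two closed forms are manifestly related. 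I would define $h_k(t) := \det C_k = \det E_k$ and note its entries lie in $\mathbb{Z}[t]$ as claimed.

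For part (2), the point is that $F_{k,n} = E_k^{-1}(D_{k,n} - y(xy)^k A_n)$ should not actually depend on $k$ once $k$ ranges over the arithmetic progression $\frac{p+1}{2} + jp$. I would substitute the closed forms: $D_{k,n} = \sum_{i=0}^{2n+1}(-y)^i + (y-1)x\sum_{i=0}^{k-1}(yx)^i(-y)^{2n+1}$, so $D_{k,n} = \sum_{i=0}^{2n+1}(-y)^i + (E_k' )(-y)^{2n+1}$ where $E_k' := (y-1)x\sum_{i=0}^{k-1}(yx)^i = C_k - 1$. Using $(xy)^k$ and $(yx)^k$ scalar (say equal to $\lambda_k I$), one can write both $E_k$ and the $k$-dependent part of $D_{k,n}$ in terms of $\lambda_k$ and $(I - \lambda_k M)(I-M)^{-1}$-type expressions for a fixed matrix $M = yx$ or $xy$; after forming $E_k^{-1}(D_{k,n} - y(xy)^k A_n)$ the $\lambda_k$-dependence cancels because $E_k^{-1}$ carries exactly the reciprocal factor. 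I would verify this cancellation by reducing everything to the single free variable and checking that the result equals $F_{k_0,n}$ for $k_0 = \frac{p-1}{2}$ (or $\frac{p+1}{2}$).

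The main obstacle I anticipate is bookkeeping the exponents and signs correctly — in particular confirming that $y^p = tI$ (not $-tI$) and that $(xy)^p$, $(yx)^p$ are the expected scalars, since the matrix $y$ carries two sign changes and one must be careful that an odd-versus-even count of them doesn't introduce a spurious sign that would break the clean cancellation in part (2). Once the scalar-power identities $x^p = y^p = tI$, $(xy)^p = (yx)^p = t^2 I$ are pinned down, both parts become routine algebra with finite geometric series, with the telescoping identity $(M-1)\sum_{i=0}^{k-1} M^i = M^k - 1$ doing all the real work.
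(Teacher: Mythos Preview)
Your approach to part (1) via the $x \leftrightarrow y$ symmetry is correct and in fact cleaner than the paper's: since $y = axa$ with $a$ the diagonal sign matrix (and $a^2 = I$), conjugation by $a$ swaps $x$ and $y$, so $aC_ka = E_k$ and $\det C_k = \det E_k$ for \emph{every} $k$, not just those in the arithmetic progression. The paper instead proves part (1) by a route that already uses the special value of $k$ (see below), so your argument here is a genuine simplification. Your alternative suggestion $E_k = x^{-1}C_kx$, however, does not hold, and the ``telescoping with scalar $(yx)^k$'' route fails for the reason explained next.

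The real problem is part (2). Your plan rests on the assertion that $(xy)^k$ and $(yx)^k$ are scalar matrices $\lambda_k I$, and this is false. Writing $y = axa$ gives $xy = (xa)^2$, so $(xy)^k = (xa)^{2k}$; since $(xa)^p = tI$, this is scalar exactly when $p \mid 2k$, i.e.\ $p \mid k$, which never happens for $k = \tfrac{p\pm 1}{2} + jp$. (Your arithmetic ``$2k+1 = p(2j+1)+1$'' is also off; with $k = \tfrac{p-1}{2}+jp$ one gets $2k+1 = p(2j+1)$, so $x^{2k+1}$ and $y^{2k+1}$ \emph{are} scalar, but the products $(xy)^k$, $(yx)^k$ you actually need are not.) Consequently there is no single scalar $\lambda_k$ through which all the $k$-dependence factors, and the cancellation you describe does not go through.

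What the paper actually uses is the non-obvious commutation relation
\[
(xy)^k x \;=\; y(xy)^k,
\]
valid precisely for $k \equiv \tfrac{p-1}{2} \pmod p$, derived from $(xa)^p = tI$ via $(xy)^{(p-1)/2}x = (xa)^p a = a(xa)^p = y(xy)^{(p-1)/2}$. With this in hand one telescopes
\[
E_k(1-xy) = (1+y(xy)^k)(1-y), \qquad C_k(1-yx) = (1+y(xy)^k)(1-x),
\]
so that $E_k^{-1}(1+y(xy)^k) = (1-xy)(1-y)^{-1}$ and $E_k^{-1}C_k = (1-xy)(1-y)^{-1}(1-x)(1-yx)^{-1}$ are both manifestly independent of $k$. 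The proof of (2) then finishes by rewriting
\[
D_{k,n} - y(xy)^k A_n \;=\; -(1+y(xy)^k)A_n - C_k\, y^{2n+1},
\]
so that $F_{k,n} = -E_k^{-1}(1+y(xy)^k)A_n - E_k^{-1}C_k\, y^{2n+1}$ is a sum of two $k$-independent terms. The missing ingredient in your proposal is exactly this intertwining identity $(xy)^k x = y(xy)^k$; once you have it, the telescoping you wanted to do becomes available, but without it the $k$-independence of $F_{k,n}$ cannot be seen by the scalar-cancellation mechanism you sketched.
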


\begin{proof} 
First, observe that $y=axa$, where $a$ is a diagonal matrix with entries 
$a_{i,i} = \left\{
 \begin{array}{cc} -1 & \text{if } i=1,2 \\
1 & \text{else}
\end{array} \right.$. Therefore, 
$ (xy)^{ \frac{p-1}{2}} x= (xaxa)^\frac{p-1}{2} x= (xa)^p a$ and
$ y(xy)^\frac{p-1}{2} = axa(xaxa)^\frac{p-1}{2}= a (xa)^p$. 
Since $(xa)^p$ can be easily computed to be the diagonal matrix $t I_{p}$, we have that $a(xa)^p=(xa)^pa$ and hence $ (xy)^{ \frac{p-1}{2}} x= y(xy)^\frac{p-1}{2}$.
It also follows that  $(xy)^{ip}= (xa)^{2ip}= t^{2i} I_{p}= (yx)^{ip}$ for any $i \in \mathbb{N}$.
Therefore, recalling that $k= jp + \frac{p-1}{2}$, we have
\[
(xy)^k x = (xy)^{jp}  (xy)^{\frac{p-1}{2}} x = t^{2j}  (xy)^{\frac{p-1}{2}} x =   y(xy)^{\frac{p-1}{2}} t^{2j}= y(xy)^{\frac{p-1}{2}} (yx)^{jp}= y(xy)^k.
\]

\noindent Now  observe that 
\begin{align*}
E_k (1-xy) &= 1- xy + (x-1)y(1-(xy)^k)= 1+ y(xy)^k-y - (xy)^{k+1} \\
&=  1+ y(xy)^k -(1+(xy)^kx)y = 1+y(xy)^k - (1+y(xy)^k)y \\
&= (1+ y(xy)^k) (1-y). 
\end{align*}Similarly,
\begin{align*}
C_k (1-yx) &= 1+x(yx)^k-x -(yx)^{k+1} =  1+ x(yx)^k - (1+y(xy)^k) x \\
&= 1+ y(xy)^k- (1+y(xy)^k)x = (1+y(xy)^k) (1-x).
\end{align*}

The matrices $x$ and $y$ are invertible, and so $\det(1-xy)= \det(1-yx)$. 
We can also explicitly check that $\det(1-xy) \neq 0$ and $\det(1-x)= \det(1-y)$, and therefore conclude that $\det(C_k)= \det(E_k)$. 

It also follows that $E_k^{-1} C_k$  and $E_k^{-1} (1+ y(xy)^k)$ are independent of $k$, since by the above
\begin{align*}
E_k^{-1} C_k &= (1-xy) (E_k (1-xy))^{-1} C_k (1-yx)(1-yx)^{-1} \\
&= (1-xy) (1-y)^{-1} (1+y(xy)^k)^{-1} (1+y(xy)^k) (1-x)(1-yx)^{-1} \\
&= (1-xy)(1-y)^{-1} (1-x)(1-yx)^{-1}.
\end{align*}
and 
\begin{align*}
E_k^{-1}(1+y(xy)^k)&= (1-xy) (1-y)^{-1} (1+y(xy)^k)^{-1}(1+y(xy)^k)
\\& = (1-xy)(1-y)^{-1}.
\end{align*}

\noindent Finally, observe that 
\begin{align*} D_{k,n}- y(xy)^kA_n &= \sum_{i=0}^{2n+1}(-y)^i + (y-1)x\sum_{i=0}^{k-1} (yx)^i (-y)^{2n+1} + y(xy)^k  \sum_{i=0}^{2n}(-y)^i \\
&= (1+y(xy)^k) \sum_{i=0}^{2n} (-y)^i + \left( 1+  (y-1)x\sum_{i=0}^{k-1} (yx)^i \right) (-y)^{2n+1} \\
&=- (1+y(xy)^k) A_n - C_k y^{2n+1}.
\end{align*}
Therefore,
$F_{k,n}= E_k^{-1} ( D_{k,n}- y(xy)^kA_n) =- E_k^{-1}(1+y(xy)^k) A_n -E_k^{-1} C_k y^{2n+1} 
$ 
is independent of $k$ as well. 
\end{proof}


\begin{lem}\label{matrixform}
Let $p$ be prime and $n \in \mathbb{N}$ such that $2n=bp+a$ for $0<a<p-1$ and $b \geq 1$. 
Then $\det(A_n+ B_n F_n) \doteq f_b(t) \det(G_n)(1+t)^{-1}$, where $f_b(t)$ is as in Lemma \ref{comp} and
\begin{align*}
G_n:= \left[
\begin{array}{ccc}
 (p-a-2)\beta_{b}(t)&-1&-1\\
\Psi_b(t) & 2(-1)^b & - 2t^{b+1}
\\
(a-2) \beta_{b+1}(t)&1&-t
\end{array}
\right]
\end{align*}
\begin{align*}
\beta_{b}(t)=2 \sum_{i=1}^b (-t)^i , \, \Psi_b(t)= (-1)^b t \left(2 \sum_{i=0}^{2b} (-t)^i + (-t)^b \right)
 \end{align*}
\end{lem}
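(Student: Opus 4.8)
The goal is to prove Lemma \ref{matrixform}: that $\det(A_n + B_n F_n) \doteq f_b(t)\det(G_n)(1+t)^{-1}$, where $A_n, B_n, F_n$ are explicit $p \times p$ matrices over $\mathbb{Q}[t^{\pm 1}]$ built from the permutation-like matrices $x$ and $y$. The plan is to exploit the fact — established in Lemma \ref{ckek} — that $F_n = F_{k,n}$ is independent of $k$, so that it may be computed at the convenient value $k_0 = \frac{p-1}{2}$, where the relation $(xy)^{k_0}x = y(xy)^{k_0}$ and the diagonal identity $(xa)^p = tI_p$ drastically simplify all the geometric-series expressions. Concretely, I would first use the formula $D_{k,n} - y(xy)^k A_n = -(1+y(xy)^k)A_n - C_k y^{2n+1}$ derived at the end of Lemma \ref{ckek}, together with $E_{k_0}^{-1}(1+y(xy)^{k_0}) = (1-xy)(1-y)^{-1}$ and $E_{k_0}^{-1}C_{k_0} = (1-xy)(1-y)^{-1}(1-x)(1-yx)^{-1}$, to get a closed form
\begin{align*}
F_n = -(1-xy)(1-y)^{-1}A_n - (1-xy)(1-y)^{-1}(1-x)(1-yx)^{-1}y^{2n+1}.
\end{align*}
Then $A_n + B_n F_n$ becomes a polynomial expression in $x, y$ alone, with $A_n = -\sum_{i=0}^{2n}(-y)^i$ and $B_n = \sum_{i=0}^{2n-1}(-x)^i$ playing the role of "truncated" versions of $(1+y)^{-1}$.

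The next step is the structural heart of the argument: reducing a $p \times p$ determinant to the $3 \times 3$ determinant $\det(G_n)$. Writing $2n = bp + a$, I would split each geometric sum $\sum_{i=0}^{2n-1}(-x)^i$ (and the analogue in $y$) using $x^p = tI_p$ — more precisely $(xy)^{jp} = t^{2j}I_p$ and $(xa)^p = tI_p$ — into $b$ full "blocks", each contributing a scalar factor involving powers of $t$, plus a leftover tail of length $a$. This is exactly where $f_b(t) = 2\sum_{i=0}^{2b}t^i + t^b$ should emerge: the full blocks assemble into a geometric-type sum in the scalar $t$ (doubled because both $A_n$ and $B_n$ contribute, with the middle term bumped to $3t^b$ because of an overlap), while the residual dependence on $x$ and $y$ collapses onto a three-dimensional space spanned by, roughly, $I_p$ and the two "tail vectors" $\sum_{i=0}^{a-1}(-x)^i$ and its $y$-counterpart, which is where the entries $(p-a-2)\beta_b(t)$, $(a-2)\beta_{b+1}(t)$, $\Psi_b(t)$, and the constants $\pm 1, 2(-1)^b, -2t^{b+1}, -t$ of $G_n$ come from. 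I would carry this out by choosing an explicit basis adapted to the cyclic structure of $x$ and $y$ — e.g. diagonalizing $x$ over an extension (its eigenvalues are the $p$-th roots of $t$) — or equivalently by recognizing $A_n + B_n F_n$ as a circulant-like matrix whose determinant factors over the eigenspaces, with the $p-1$ "nontrivial" eigenvalues all equal (this is the content of $V_p$ being irreducible) and contributing the $h$-independent bulk, and one remaining factor giving the genuine polynomial content.

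The main obstacle will be the bookkeeping in this block-decomposition step: tracking exactly how the tails of length $a$ in the $x$-sum and the $y$-sum interact under the substitution $y = axa$, and verifying that the coefficients of the residual $3 \times 3$ system match $G_n$ on the nose, including the delicate off-by-constants ($p - a - 2$ versus $p - a$, the $-2$ shifts, the sign $(-1)^b$, and the factor $(1+t)^{-1}$ that must divide $\det(G_n)$). The sign and indexing conventions — which rows of $\Phi(Z)$ got deleted, the orientation of the Wirtinger relations, and the earlier row/column reductions producing $\widehat{\Phi(Z)}$ — all feed in here, so I would double-check the final formula against a small explicit case such as $p = 3$ or the knot $P(8,7,-9,-7)$ mentioned in the footnote, where everything can be computed by hand. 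Once the determinant is reduced to $f_b(t)\det(G_n)(1+t)^{-1}$, the conclusion is immediate; the effort is entirely in the reduction, and Lemma \ref{ckek} is what makes it tractable by letting us work at $k_0 = \frac{p-1}{2}$.
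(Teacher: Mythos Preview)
Your setup is right and matches the paper: you correctly invoke Lemma \ref{ckek} to replace $F_n$ by its $k$-independent closed form $F_n = -(1-xy)(1-y)^{-1}A_n - (1-xy)(1-y)^{-1}(1-x)(1-yx)^{-1}y^{2n+1}$, and you correctly note that $x^p = y^p = tI_p$ so that the sums split into $b$ full blocks plus a tail of length $a$. But the heart of the lemma is the reduction from a $p\times p$ determinant to a $3\times 3$ one, and here your proposed mechanism does not work. The matrices $x$ and $y$ do \emph{not} commute (indeed $y = axa$ with $a$ the sign matrix), so there is no common eigenbasis and $A_n + B_nF_n$ is not circulant or block-circulant in any useful sense; diagonalizing $x$ makes $B_n$ diagonal but leaves $A_n$ and $F_n$ dense. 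Your appeal to the irreducibility of $V_p$ is also misplaced: that is a statement about an $\mathbb{F}_2[\mathbb{Z}_p]$-module and says nothing about the eigenvalue structure of $A_n + B_nF_n$ over $\mathbb{Q}(t)$.

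The paper's route is quite different and more pedestrian. The key first move you are missing is to right-multiply by $(1+y)$: since $A_n(1+y) = -(1+y^{2n+1})$ and $\det(1+y) = 1+t$, this simultaneously collapses the $A_n$ term and explains the stray $(1+t)^{-1}$ in the statement. After that, the paper writes out $-(A_n+B_nF_n)(1+y)$ explicitly as a $p\times p$ block matrix (with blocks of sizes governed by $a$ and $p-a$), performs eight specific elementary row and column operations to produce the entry $f_b(t) = \epsilon_b(t) + t^b\gamma_b(t)$ in one position, deletes two rows and two columns to get a $(p-2)\times(p-2)$ matrix $M_n$, and then uses the bidiagonal structure of the remaining columns (each has exactly two nonzero entries $-t^b$ or $-t^{b+1}$) to collapse $M_n$ to the $3\times 3$ matrix $G_n$ by further row moves. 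In short, the reduction is achieved by hands-on Gaussian elimination on an explicitly displayed matrix, not by any spectral argument; your plan would need to be replaced by this kind of direct computation.
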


\begin{proof}
First, observe that when $p=3$ or $p=5$ $A_n+B_nF_n$ is of small size, and one can explicitly compute the form above, with minimal simplification required. So suppose $p \geq 7$. 
Observe that $A_n(1+y)= -(1+ y^{2n+1})$, so we will begin by considering the matrix
\begin{align*}
-\left( A_n + B_n F_n \right) (1+y) =- A_n + B_n E_0^{-1} ( C_0 y^{2n+1} + (1+y(xy))^{\frac{p-1}{2}} A_n) (1+y) \\
= 1+ y^{2n+1} +B_n \left( E_0^{-1}C_0 (1+y)y^{2n+1} - E_0^{-1} (1+y(xy)^{k_0})(1+y^{2n+1}) \right)
\end{align*}

We can compute $E_0^{-1} C_0$ and $E_0^{-1} (1+y(xy)^{k_0})$ using the expressions from Lemma \ref{ckek}. Also note that $ (1+x)B_n= 1- x^{2n}$ is also easily computable, leading us to an easy verification for the form of $B_n$.

 Combining these expressions, we get the following form for $(-1)\left(A_n + B_n F_n \right) (1+y)$ when $1<a<p-2$. Note that similar expressions hold for $a=1$ and $a=p-2$.

\begin{landscape}
The matrix $-\left(A_n + B_n F_n \right) (1+y)$, written as a block matrix with dimensions 
$(2+(p-a-2)+2+(a-2), 2+(a-1)+2+(p-a-3))$.

\begin{align*}
\footnotesize
 \left[
\begin{array}{c|c|c|c}
\begin{array}{cc}
-\gamma_b(t) & - \gamma_b(t) \\
\gamma_b(t) & \gamma_b(t)\\
\end{array}
&
\begin{array}{cccc}
\alpha_b(t)    &\cdots & \alpha_b(t) & \alpha_b(t) - t^b \\
-\alpha_b(t)&  \cdots & -\alpha_b(t)& -\alpha_b(t) 
\end{array}
&
\begin{array}{cc}
\epsilon_b(t) & \epsilon_b(t) - t^b \\
- \epsilon_b(t) & -\epsilon_b(t)+2t^b
\end{array}
&
 \begin{array}{cccc}
\alpha_{b-1}(t) & \alpha_{b-1}(t)& \cdots & \alpha_{b-1}(t) \\
-\alpha_{b-1}(t) & -\alpha_{b-1}(t) & \cdots & - \alpha_{b-1}(t)
\end{array}
\\ \hline
\begin{array}{cc}
\beta_b(t) & \beta_b(t) \\
-\beta_b(t) & - \beta_b(t) \\
\vdots & \vdots \\
\beta_b(t) & \beta_b(t)\\
- \beta_b(t) + t^{b+1} & - \beta_b(t) 
\end{array}
&
\text{ \Huge{0}}_{p-a-2,a-1} 
&
\begin{array}{cc}
t^b \beta_b(t) & t^b \beta_b(t) - t^b \\
-t^b \beta_b(t) & - t^b \beta_b(t) \\
\cdots & \cdots \\
t^b \beta_b(t) & t^b \beta_b(t) \\
-t^b \beta_b(t) & - t^b \beta_b(t)
\end{array}
&
\begin{array}{cccc}
\quad -t^b \qquad & 0 \quad& \cdots & \quad 0 \quad \\
\quad -t^b \qquad &  -t^b \quad & \cdots & \quad 0 \quad\\
\quad 0 \qquad& \ddots & \ddots & 0\\
\quad 0 \qquad & \cdots & -t^b& -t^b \\
\quad 0 \qquad& \cdots & 0 & -t^b \\
\end{array}
\\ \hline
\begin{array}{cc}
\eta_b(t) & \eta_b(t) + 2t^{b+1} \\
-\eta_b(t) -t^{b+1}& - \eta_b(t) - 2t^{b+1} 
\end{array}
&
\begin{array}{cccc}
-\phi_b(t) & -\phi_b(t) & \cdots  & -\phi_b(t) 
\\
\phi_b(t) - t^{b+1} & \phi_b(t)&  \cdots  & \phi_b(t) 
\end{array} 
&
\begin{array}{cc}
-4t^{2b} & -4t^{2b} \\
4t^{2b} & 4t^{2b}
\end{array}
&
\begin{array}{cccc}
-\theta_b(t) \,\,  &\quad -\theta_b(t) &\quad \cdots  & \quad -\theta_b(t)
\\
\theta_b(t) \,\, &\quad\theta_b(t) & \quad \cdots  & \quad \theta_b(t)
\end{array}
\\ \hline
\begin{array}{cc}
\beta_{b+1}(t) & \beta_{b+1}(t) \\
-\beta_{b+1}(t) & -\beta_{b+1}(t)\\
\vdots & \vdots \\
\beta_{b+1}(t) & \beta_{b+1}(t) \\
\end{array}
&
\begin{array}{ccccc}
-t^{b+1} & -t^{b+1}  & 0 & \cdots & 0\\
0 & t^{b+1} & -t^{b+1} & \cdots & 0\\
\vdots & \ddots & \ddots & \ddots & \vdots \\
0 & 0 & \cdots & - t^{b+1} & - t^{b+1} \\
\end{array}
&
\begin{array}{cc}
t^b\beta_{b+1}(t) & t^b\beta_{b+1}(t) \\
-t^b\beta_{b+1}(t) & -t^b\beta_{b+1}(t)\\
\vdots & \vdots \\
t^b\beta_{b+1}(t) &t^b \beta_{b+1}(t) \\
\end{array}
&
\text{\Huge{0}}_{a-2,p-a-3}
\end{array}
\right] 
\end{align*}

\begin{align*} \text{ where } 
\alpha_b(t)&: = 2 \sum_{i=0}^b t^i, \, \beta_b(t) := 2 \sum_{i=1}^b (-t)^i,
  \, \gamma_b(t):= 4 \sum_{i=0}^{ \lceil \frac{b}{2} \rceil} t^{2i+1}, \,
  \eta_b(t): = 2 \sum_{i=b+2}^{2b+1}t^i + t^{b+1} + (-1)^{b+1}2 \sum_{i=1}^b (-t)^i, \\
  \epsilon_b(t) &:= (-1)^b 2 \sum_{i=b+1}^{2b} (-t)^i + 3t^b + 2 \sum_{i=0}^{b-1} t^i, \,   \theta_b(t):=  t^{b+1} \alpha_{b-1}(t), \, \text{ and }\phi_b(t):=\theta_b(t) + 2t^{2b+1} = t^{b+1} \alpha_b(t) 
\end{align*}

(Examination of the alternating signs above indicates that the form above applies only when $a$ is odd. The $a$ is even case is exactly analogous and omitted.)

\end{landscape}

\begin{landscape}
Some easy row and column moves\footnote{To be specific, perform the following operations, in this order:
add $r_1$ to $r_2$, 
add $r_{p-a+1}$ to $r_{p-a+2}$,
 add $-c_1$ to $c_2$, 
 add $-c_{a+2}$ to $c_{a+3}$, 
 add $-t^bc_1$ to $c_{a+2}$, 
 add $c_2$ to $c_1$, 
 add $-t^bc_2$ to $c_{a+2}$,
  and add $t^{b+1} r_1$ to $r_{p-a-1}$.}
 let us rewrite this matrix as follows:

\begin{align*} 
\footnotesize
 \left[
\begin{array}{c|c|c|c}
\begin{array}{cc}
-\gamma_b(t) & 0\\
0&0\\
\end{array}
&
\begin{array}{cccc}
\alpha_b(t)    &\cdots & \alpha_b(t) & \alpha_b(t) - t^b \\
0&  \cdots &0& -t^b
\end{array}
&
\begin{array}{cc}
f_b(t)& - t^b \\
0 & t^b
\end{array}
&
 \begin{array}{cccc}
\alpha_{b-1}(t) & \alpha_{b-1}(t)& \cdots & \alpha_{b-1}(t) \\
0 &0 & \cdots & 0
\end{array}
\\ \hline
\begin{array}{cc}
\beta_b(t) & 0 \\
-\beta_b(t) & 0 \\
\vdots & \vdots \\
\beta_b(t) &0\\
- \beta_b(t) &-t^{b+1}
\end{array}
&
\text{ \Huge{0}}_{p-a-2,a-1} 
&
\begin{array}{cc}
0 & - t^b \\
0 & 0\\
\vdots & \vdots \\
0 &0 \\
0& 0
\end{array}
&
\begin{array}{cccc}
\quad -t^b \qquad & 0 \quad& \cdots & \quad 0 \quad \\
\quad -t^b \qquad &  -t^b \quad & \cdots & \quad 0 \quad\\
\quad 0 \qquad& \ddots & \ddots & 0\\
\quad 0 \qquad & \cdots & -t^b& -t^b \\
\quad 0 \qquad& \cdots & 0 & -t^b \\
\end{array}
\\ \hline
\begin{array}{cc}
\Psi_b(t) &  2t^{b+1} \\
0& t^{b+1}
\end{array}
&
\begin{array}{ccccc}
0&0 & \cdots&0  & -t^{2b+1}
\\
- t^{b+1} &0&  \cdots&0  &0 
\end{array} 
&
\begin{array}{cc}
0& -t^{2b+1} \\
0& 0
\end{array}
&
\begin{array}{cccc}
0\,\,  &\quad0 &\quad \cdots  & \quad0
\\
0 \,\, &\quad 0 & \quad \cdots  & \quad0
\end{array}
\\ \hline
\begin{array}{cc}
\beta_{b+1}(t) &0 \\
-\beta_{b+1}(t) & 0\\
\vdots & \vdots \\
\beta_{b+1}(t) &0 \\
\end{array}
&
\begin{array}{ccccc}
-t^{b+1} & -t^{b+1}  & 0 & \cdots & 0\\
0 & t^{b+1} & -t^{b+1} & \cdots & 0\\
\vdots & \ddots & \ddots & \ddots & \vdots \\
0 & 0 & \cdots & - t^{b+1} & - t^{b+1} \\
\end{array}
&
\begin{array}{cc}
0&0 \\
0 &0\\
\vdots & \vdots \\
0 &0\\
\end{array}
&
\text{\Huge{0}}_{a-2,p-a-3}
\end{array}
\right] 
\end{align*}

\begin{align*} \text{ where } 
\alpha_b(t)&: = 2 \sum_{i=0}^b t^i, \, \beta_b(t) := 2 \sum_{i=1}^b (-t)^i,
  \, \gamma_b(t):= 4 \sum_{i=0}^{ \lceil \frac{b}{2} \rceil} t^{2i+1}, \,
  \eta_b(t): = 2 \sum_{i=b+2}^{2b+1}t^i + t^{b+1} +(-1)^{b+1} 2 \sum_{i=1}^b (-t)^i, \\
  \epsilon_b(t) &:= (-1)^b 2 \sum_{i=b+1}^{2b} (-t)^i + 3t^b + 2 \sum_{i=0}^{b-1} t^i, \, \Psi_b(t):= \eta_b(t) + 2t^{b+1} -t^{b+1} \gamma_b(t)
\end{align*}

Note that $f_b(t) = 2\sum_{i=0}^{2b} t^i + t^b$ is obtained in the above matrix as $f_b(t)= \epsilon_b(t) + t^b \gamma_b(t)$. 

\end{landscape}

Therefore, $\det (A_n + B_n F_n) \doteq f_b(t) \det (M_n) (1+t)^{-1}$, where $M_n$ is obtained from the previous matrix by the deletion of rows 1, 2 and columns $p-a+1, p-a+2$ and moving a column. 

\begin{align}
\footnotesize
M_n=
\left[
\begin{array}{c|c|c}
\begin{array}{ccc}
\beta_b(t) & 0 &-t^b \\
-\beta_b(t) & 0 &0 \\
\vdots & \vdots & \vdots\\
\beta_b(t) &0&0\\
- \beta_b(t) &-t^{b+1}&0
\end{array}
&
\text{ \Large{0}}_{p-a-2,a-2} 
&
\text{\Large{E}}_{p-a-2}^b
\\ \hline
\begin{array}{ccc}
\Psi_b(t) &  2t^{b+1} &-2t^{2b+1}
\end{array}
&
\begin{array}{ccc}
0 & \cdots&0  
\end{array}
&\begin{array}{ccc}
0 & \cdots & 0
\end{array}
\\ \hline
\begin{array}{ccc}
0& t^{b+1}&0 \\
\beta_{b+1}(t) &0& 0 \\
\vdots & \vdots &\vdots \\
-\beta_{b+1}(t)& 0 & 0 \\
\beta_{b+1}(t) &0 &-t^{b+1} \\
\end{array}
&
\text{\Large{E}}_{a-1}^{b+1}
&
\text{\Large{0}}_{a,p-a-3}
\end{array}
\right]_{p-2,p-2}
\end{align}

where
\[ E_{k}^b:=
\left[
\begin{array}{ccccc}
-t^b & 0& 0 & \cdots & 0\\
-t^b & -t^b & 0 & \cdots & 0 \\
0 & -t^b & -t^b & \ddots & 0\\
\vdots & \ddots& \ddots & \ddots & \vdots\\
0 & \cdots & 0 & -t^b & -t^b \\
0 & \cdots & 0 & 0 & -t^b\\
\end{array}
\right]_{k,k-1}.
\]

Note that each of the columns $c_4, \dots, c_{p-2}$ of $M_n$ contain exactly two nonzero entries.  We can apply simple row moves to show that $\det(M_n) \doteq \det(G_n)$, where

\begin{align*}
G_n:= \left[
\begin{array}{ccc}
 (p-a-2)\beta_{b}(t)&-1&-1\\
\Psi_b(t) & 2(-1)^b & - 2t^{b+1}
\\
(a-2) \beta_{b+1}(t)&1&-t
\end{array}
\right]
\end{align*}

Finally, note that 

\begin{align*}
\Psi_b(t)&=\eta_b(t) + 2t^{b+1} -t^{b+1} \gamma_b(t) \\
&= 2 \sum_{i=b+2}^{2b+1} t^i + 3t^{b+1} + (-1)^{b+1} 2 \sum_{i=1}^{b} (-t)^i -4 t^{b+1} \sum_{i=1}^{\lceil \frac{b}{2} \rceil} t^{2i+1} \\
&= t \left( 
(-1)^b \sum_{i=b+1}^{2b} 2(-t)^i + 3t^{b} + (-1)^b  \sum_{i=0}^{b-1} 2(-t)^i \right) \\
&= (-1)^b t \left(2 \sum_{i=0}^{2b} (-t)^i + (-t)^b \right), \text{ as desired.}
\end{align*}
\end{proof}

\subsection{Sample computations of $f_b(t)$ and $g_n(t)$.}

Finally, we give some computations of $f_b(t)$ and $g_n(t)$, normalized to have positive leading coefficient. Observe that when $(n,p)=(3,5)$ we have that $f_b(t)= g_n(t)$, and so the associated twisted Alexander polynomial $f_b(t) g_n(t) h_k(t)^2 (t-1)^{-2}$ is a norm. 

\begin{table}[htdp]
\begin{center}
$
\begin{array}{|c|c|c|c|}
n & 2n=bp+a & f_b(t) & g_n(t) \\ \hline
6 & 12 = 1(11)+1 & 2t^2+3t+2 & 2t^2+27t+2 \\
7 & 14 = 1(11)+3 & 2t^2+3t+2 & 6t^2-35t+6 \\
8 & 16 = 1(11)+5 & 2t^2+3t+2 & 14t^2-43t+14 \\
9 & 18 = 1(11)+7 & 2t^2+3t+2 & 22t^2-51t+22\\
10 & 20 = 1(11)+9 & 2t^2+3t+2 & 30t^2-59t+30 \\
12& 24= 2(11)+2 & 2t^4+ 2t^3+ 3t^2+2t+2 & 2t^4-30t^3+59t^2-30t+2 \\
13& 26= 2(11)+4 & 2t^4+ 2t^3+ 3t^2+2t+2 & 10t^4-38t^3+67t^2-38t+10 \\
14& 28= 2(11)+6 & 2t^4+ 2t^3+ 3t^2+2t+2 & 18t^4-46t^3+75t^2-46t+18 \\
\end{array}
$
\end{center}
\caption{Some computations of $f_b(t)$ and $g_n(t)$, with $p=11$}
\label{examples}
\end{table}%

\begin{table}[htdp]
\begin{center}
$
\begin{array}{|c|c|c|c|}
n & 2n=bp+a & f_b(t) & g_n(t) \\ \hline
3& 6= 1(5)+1 & 2t^2+3t+2 & 2t^2+3t+2 \\
4&8=1(5)+3 & 2t^2+3t+2 & 6t^2-11t+6 \\
6& 12= 2(5)+2 & 2t^4+ 2t^3+ 3t^2+2t+2 & 2t^4-6t^3+11t^2-6t+2\\
8& 16= 3(5)+1 & 2t^6+2t^5+2t^4+3t^3+2t^2+2t+2 &2t^6+2t^5-6t^4+11t^3-6t^2+2t+2\\
9& 18= 3(5)+3 & 2t^6+2t^5+2t^4+3t^3+2t^2+2t+2 &6t^6-10t^5+14t^4-19t^3+14t^2-10t+6
\end{array}
$
\end{center}
\caption{More computations of $f_b(t)$ and $g_n(t)$, with $p=5$.}
\label{examples}
\end{table}%

\section*{Acknowledgements}
I would like to thank my advisor Cameron Gordon for his guidance and very helpful suggestions, as well as the mathematics department at the University of Texas, Austin for my support via the NSF grant DMS-1148490.

\bibliography{twistedalex}

\end{document}